\newtheorem{theorem}{Theorem}[section]
\newtheorem{lemma}[theorem]{Lemma}
\newtheorem{remark}[theorem]{Remark}
\newtheorem{prop}[theorem]{Proposition}
\newtheorem{corollary}[theorem]{Corollary}
\numberwithin{equation}{section}
\newcommand{\R}{{\mathbb R}}
\newcommand{\D}{{\mathbb D}}
\newcommand{\C}{{\mathbb C}}
\newcommand{\N}{{\mathbb N}}
\newcommand{\cL}{{\mathcal L}}
\newcommand{\ve}{\varepsilon}
\newcommand{\su}{\subseteq}
\newcommand{\sC}{\mathsf{C}}
\newcommand{\ov}{\overline}
\newcommand\Ker{\mathop{\rm Ker}}
\begin{document}

\title{The Ces\`aro operator on power series spaces }

\author{Angela\,A. Albanese, Jos\'e Bonet, Werner \,J. Ricker }

\dedicatory{Dedicated to the late Pawe{\l} Doma\'nski}

\thanks{\textit{Mathematics Subject Classification 2010:}
Primary 47A10, 47A35, 47B37; Secondary  46A04, 47A16, 46A45, 46B45.}
\keywords{Ces\`aro operator,  power series space of finite order, spectrum, Fr\'echet space, mean ergodicity.}
\thanks{This article is accepted for publication in Studia Mathematica}

\address{ Angela A. Albanese\\
Dipartimento di Matematica e Fisica ``E. De Giorgi''\\
Universit\`a del Salento- C.P.193\\
I-73100 Lecce, Italy}
\email{angela.albanese@unisalento.it}

\address{Jos\'e Bonet \\
Instituto Universitario de Matem\'{a}tica Pura y Aplicada
IUMPA \\
Universitat Polit\`ecnica de
Val\`encia\\
E-46071 Valencia, Spain} \email{jbonet@mat.upv.es}

\address{Werner J.  Ricker \\
Math.-Geogr. Fakult\"{a}t \\
 Katholische Universit\"{a}t
Eichst\"att-Ingol\-stadt \\
D-85072 Eichst\"att, Germany}
\email{werner.ricker@ku.de}
\markboth{A.\,A. Albanese, J. Bonet, W. \,J. Ricker}%
{\MakeUppercase{ }}

\begin{abstract} The discrete Ces\`aro operator $\sC$ is investigated in the class of power series spaces $\Lambda_0(\alpha)$ of finite type. Of main interest is its spectrum, which is distinctly different when the underlying Fr\'echet space $\Lambda_0(\alpha)$ is nuclear as for the case when it is not. Actually, the nuclearity of $\Lambda_0(\alpha)$ is characterized via certain properties of the spectrum of $\sC$. Moreover, $\sC$ is always power bounded, uniformly mean ergodic  and, whenever $\Lambda_0(\alpha)$ is nuclear, also has the property that the range $(I-\sC)^m(\Lambda_0(\alpha))$ is closed in $\Lambda_0(\alpha)$, for each $m\in\N$.
\end{abstract}

\maketitle

\markboth{A.\,A. Albanese, J. Bonet and W.\,J. Ricker}%
{\MakeUppercase{Spectrum and compactness of the Ces\`aro operator }}

\section{Introduction.}

The Ces\`aro operator $\sC$, and some of its generalizations,  have been investigated in many Banach sequence spaces and Banach  spaces of analytic functions. Some of these generalizations and certain unifying approaches to them  can be found in  \cite{AP, GS, Rud} and the references therein. The situation when $\sC$ acts in a Fr\'echet
(locally convex) space is also of interest. The setting of this paper is the discrete Ces\`aro operator $\sC$  defined on the linear space $\C^\N$ (consisting of all scalar sequences) by
$$
\sC x:=\left(x_1, \frac{x_1+x_2}{2}, \ldots, \frac{x_1+\ldots +x_n}{n}, \ldots\right),\quad x=(x_n)_{n\in\N}\in \C^\N.
$$
The linear operator $\sC$ is said to \textit{act} in a vector subspace  $X\su\C^\N$ if it maps $X$ into itself. Two fundamental questions are: Is $\sC\colon X\to X$ continuous and, if so, what is
its  spectrum? Amongst the classical Banach spaces $X\su \C^\N$ where precise answers are known we mention $\ell_p$ ($1<p<\infty$), \cite{BHS}, \cite{L}, and $c_0$, \cite{L}, \cite{R},  both $c$, $\ell_\infty$, \cite{A-B}, \cite{L}, as well as $ces_p$, $p\in \{0\}\cup (1,\infty)$, \cite{C-R}, the Bachelis spaces $N^p$, $2\leq p<\infty$, \cite{C-R1}, the spaces of bounded variation $bv_0$, \cite{Ok}, and bounded $p$-variation $bv_p$, $1\leq p<\infty$, \cite{A-B2}, and the  weighted Banach spaces $\ell_p(w)$, $1<p<\infty$, \cite{ABR}, and $c_0(w)$, \cite{ABR-9}. The behaviour of $\sC$  (and other Hausdorff operators) in the Fr\'echet space $\C^\N$, and of its dual operator $\sC'$,  are known since the work of Hausdorff \cite{Hau}; see also \cite{H43, H49, Rud}. The discrete Ces\`aro operator $\sC$ has also been studied  in the \textit{Fr\'echet} spaces $\ell^{p+} := \cap_{q>p} \ell_q$,    \cite{ABR-1}.  There is no claim that this list of spaces (and references) is complete.

The aim of this paper is to investigate the behaviour of $\sC$ in the class of Fr\'echet sequence spaces consisting of the  \textit{power series spaces of finite type} $\Lambda_0(\alpha)\su\C^\N$, where $\alpha=(\alpha_n)_n$ is any positive sequence satisfying $\alpha_n\uparrow\infty$ (see Section 2 for the definition). Such spaces play an important role in the structure theory of Fr\'echet spaces, \cite{MV}, \cite{V1}, \cite{V2}.
First, $\sC\colon  \Lambda_0(\alpha)\to \Lambda_0(\alpha)$ is always  continuous (see Proposition \ref{cont}), which is not necessarily the case for power series spaces of infinite order. In Section 2 a detailed investigation is made of the spectrum  $\sigma(\sC;\Lambda_0(\alpha))$ and the point spectrum $\sigma_{pt}(\sC;\Lambda_0(\alpha))$ of $\sC\colon  \Lambda_0(\alpha)\to \Lambda_0(\alpha)$. A remarkable feature arises in this regard. It is known that   $\Lambda_0(\alpha)$ is always a Fr\'echet  Schwartz space but that it is \textit{nuclear} if and only if $\lim_{n\to\infty}\frac{\log n}{\alpha_n}=0$. These facts are totally independent of the Ces\`aro operator $\sC$. Nevertheless, certain spectral properties of $\sC$ turn out to  \textit{characterize} the nuclearity of $\Lambda_0(\alpha)$. Indeed, with the notation $\Sigma:=\{\frac{1}{m}\colon m\in\N\}$ and $\Sigma_0:=\{0\}\cup\Sigma$, the equivalence of the following assertions is established (see Propositions \ref{P-K1} and \ref{P-K2} and Corollary \ref{C.Su-1}), where we recall that $\sC\colon \C^\N\to\C^\N$ always possesses an inverse operator (denoted by $\sC^{-1}$).
\begin{itemize}
\item[\rm (i)] $\Lambda_0(\alpha)$ is \textit{nuclear}.
\item[\rm (ii)] $\sC^{-1}\colon \Lambda_0(\alpha)\to \Lambda_0(\alpha)$ is \textit{continuous}, i.e., $0\not\in\sigma(\sC;\Lambda_0(\alpha))$.
\item[\rm (iii)] $\sigma_{pt}(\sC;\Lambda_0(\alpha))=\Sigma$.
\item[\rm (iv)] $\sigma_{pt}(\sC;\Lambda_0(\alpha))\setminus\{1\}\not=\emptyset$.
\item[\rm (v)] $\sigma(\sC;\Lambda_0(\alpha))=\Sigma$.
\item[\rm (vi)] $\sigma_{pt}(\sC;\Lambda_0(\alpha))=\sigma(\sC;\Lambda_0(\alpha))$.
\end{itemize}

Remark \ref{R.S-C} shows  that  always $\Sigma\su \sigma(\sC;\Lambda_0(\alpha))\su \left\{\lambda\in\C\colon \left|\lambda-\frac{1}{2}\right|\leq \frac{1}{2}\right\}$.
On the other hand, if there exists a real number $s\geq 1$ satisfying $\sum_{n=1}^\infty\frac{e^{\alpha_n}}{n^s}<\infty$, then $\Lambda_0(\alpha)$ fails to be nuclear and the inclusions
\[
\{1\}\cup \left\{\lambda\in\C\colon \left|\lambda-\frac{1}{2}\right|< \frac{1}{2}\right\}\su \sigma(\sC;\Lambda_0(\alpha))\su \left\{\lambda\in\C\colon \left|\lambda-\frac{1}{2}\right|\leq \frac{1}{2}\right\}
\]
hold; see Proposition \ref{P-spectrum-no}.  For $\alpha_n:=n$, for $n\in\N$, the space $\Lambda_0(\alpha)$ is isomorphic to the nuclear Fr\'echet space $H(\D)$ of all analytic functions on the  open unit disc $\D$ (with the topology of uniform convergence on compact subsets of $\D$).  Our results imply, via different methods, the known fact  that $\sigma(\sC;H(\D))=\sigma_{pt}(\sC; H(\D))=\Sigma$; see  \cite[pp.65-68]{Dah} and also \cite{AP}.

Section 3 is devoted to  mean ergodic properties of $\sC\colon \Lambda_0(\alpha)\to \Lambda_0(\alpha)$. In Proposition \ref{p.unif}  it is shown that $\sC$ is always power bounded and uniformly mean ergodic. If $\Lambda_0(\alpha)$ is nuclear,  then the range $(I-\sC)^m(\Lambda_0(\alpha))$ is always  closed in $\Lambda_0(\alpha)$ for every $m\in\N$ (see Proposition \ref{p.range}).

\section{Continuity and spectrum of $\sC$  on $\Lambda_0(\alpha)$}

Let $X$ be a locally convex Hausdorff space (briefly, lcHs) and $\Gamma_X$ a system of continuous seminorms determining the topology of $X$.  The identity operator on $X$ is denoted by $I$ and  $\cL(X)$ denotes the space of all continuous linear operators from $X$ into itself. Let $\cL_s(X)$ denote $\cL(X)$ endowed with the strong operator topology $\tau_s$ which is determined by the  seminorms $T \rightarrow q_x(T):=q(Tx)$, for  each $x\in X$ and $q\in \Gamma_X$. Moreover,  $\cL_b(X)$ denotes $\cL(X)$ equipped with the  topology $\tau_b$ of uniform convergence on bounded subsets of $X$ which is determined by the  seminorms $T \rightarrow q_B(T):=\sup_{x\in B}q(Tx)$, for  each $B \subseteq X$ bounded and $q\in \Gamma_X$.

A sequence  $A = (a_k)_k$ of functions $a_k: \N \to [0, \infty)$  is called a \textit{K\"{o}the matrix} on $\N$ if
 $0 \leq a_k (n) \le a_{k+1} (n)$ for all $n \in \N$ and $k\in\N$, and if for each $n \in \N$ there is $k \in \N$ with $a_k(n)>0$. The \textit{K\"{o}the echelon space of order $0$} associated to  $A$ is
\[
\lambda_0 (A) : = \{x \in \C^{\N} :  \lim_{n  }a_k (n) x_n = 0 , \ \forall k \in\N\},
\]
which is a Fr\'echet space relative to the increasing sequence of canonical seminorms
$$
q^{(\infty)}_k (x) := \sup_{n \in \N } a_k (n ) |x_n |, \quad  x\in \lambda_0 (A), \quad k \in \N.
$$
Then $\lambda_0(A)=\cap_{k \in \N} c_0(a_k)$, with  $c_0(a_k)$ the usual weighted Banach space. The space $\lambda_0 (A)$ is given the projective limit topology, i.e., $\lambda_0(A)= {\rm proj}_{k \in \N} c_0(a_k)$. For the  theory of the K\"othe echelon spaces $\lambda_p (A)$, $1 \leq p \leq \infty$ or $p=0$,  see \cite{MV}.

Fix a sequence $\{r_k\}_{k\in\N}\su (0,1)$ satisfying $\lim_{k\to\infty}r_k=1$ with $r_k<r_{k+1}$ for all $k\in\N$. Moreover, let $\alpha:=\{\alpha_n\}_{n\in\N}\su (1,\infty)$ satisfy $\lim_{n\to\infty}\alpha_n=\infty$ with $\alpha_n<\alpha_{n+1}$ for all $n\in\N$; we simply write $\alpha_n\uparrow\infty$. For each $ k\in\N$ define $w_k\colon \N\to (0,\infty)$ by $w_k(n):=(r_k)^{\alpha_n}$, for $n\in\N$, in which case $A=(w_k)_{k}$ is a K\"othe matrix. The \textit{power series space of finite type} associated to $\alpha$ is defined by
\[
\Lambda_0(\alpha):=\left\{x\in\C^\N\colon \lim_{n\to\infty}w_k(n)|x_n|=0,\ \forall k\in\N\right\},
\]
and coincides with $\lambda_0(A)$ in the above notation. Then $\Lambda_0(\alpha)=\cap_{k\in\N}c_0(w_k)$ and its Fr\'echet space lc-topology is generated by the increasing sequence of norms
\begin{equation}\label{eq.seminorme}
p_k(x):=\sup_{n\in\N}w_k(n)|x_n|, \quad x\in \Lambda_0(\alpha), \quad k \in \N.
\end{equation}

\begin{remark}\label{R.W1}\rm
(i) The space $\Lambda_0(\alpha)$ and its topology are independent of the  increasing sequence $\{r_k\}_{k\in\N}$ tending to $1$.
 We always choose  $r_k=e^{-1/k}$ so that $w_k(n)=e^{-\alpha_n/k}$, for all $k, n \in \N$. Then $w_k \le w_l$ on $\N$ whenever $k \le l$.

(ii) For each $k\in\N$, the condition $\frac{r_k}{r_{k+1}}<1$ implies that $\left(\frac{w_k(n)}{w_{k+1}(n)}\right)_n\in c_0$ and so,  for every $\alpha$, the space $\Lambda_0(\alpha)$ is Fr\'echet Schwartz, \cite[Theorem 27.9, Proposition 27.10]{MV}, and hence, also   Fr\'echet Montel.

(iii) Since $\Lambda_0(\alpha)$ coincides with $\lambda_0(A)$ in the above notation, for the matrix $A=(w_k)_k$, Proposition 28.16  in \cite{MV} yields that $\Lambda_0(\alpha)$ is nuclear if and only if $\Lambda_0(\alpha)=\lambda^2(A)=\cap_{k\in\N}\ell_2(w_k)$, \cite[Definition, p.326]{MV}. According to \cite[Proposition 29.6]{MV} we can conclude that $\Lambda_0(\alpha)$ is a nuclear Fr\'echet space if and only if
$\lim_{n\to\infty}\frac{\log n}{\alpha_n}=0$. Observe that power series spaces are defined in Chapter 29 of \cite{MV} using $\ell_2$-norms. Examples of sequences $\alpha_n\uparrow \infty$ such that $\Lambda_0(\alpha)$ is \textit{not} nuclear include $\alpha_n:=\beta \log n$, for $n\in\N$ and any fixed $\beta>0$, and $\alpha_n:=\log (\log n)$ for $n>e^e$.
\end{remark}

The nuclearity criterion $\lim_{n\to\infty}\frac{\log n}{\alpha_n}=0$ will play a significant role later. Recall that $x=(x_n)_n\in\C^\N$ belongs to the space $s$ of \textit{rapidly decreasing sequences} if and only if $(n^k x_n)_n$ is a bounded sequence for each $k \in \N$.

\begin{lemma}\label{nuclcond}
Let $0<r<1$ and let the sequence $\alpha$ satisfy $\alpha_n\uparrow\infty$. The strictly decreasing sequence $w=(w(n))_n$, with $w(n):= r^{\alpha_n}$, for $n \in \N,$ satisfies $w\in c_0$. Moreover, $w$ belongs to $s$ if and only if $\lim_{n\to\infty}\frac{\log n}{\alpha_n}=0$.
\end{lemma}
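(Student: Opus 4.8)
The plan is to prove each assertion separately, since the statement has two independent parts: first that $w \in c_0$ always, and second the characterization of membership in $s$.

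For the first part, I would argue directly. Since $\alpha_n \uparrow \infty$ and $0 < r < 1$, we have $\log r < 0$, so $w(n) = r^{\alpha_n} = e^{\alpha_n \log r} \to 0$ as $n \to \infty$; monotonicity of $w$ follows since $\alpha_n$ is strictly increasing and $t \mapsto r^t$ is strictly decreasing. This also shows $w(n) > 0$ for all $n$, so $w$ is a legitimate strictly positive weight.

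For the second part, recall that $w \in s$ means $(n^k w(n))_n$ is bounded for every $k \in \N$, equivalently $\sup_n n^k r^{\alpha_n} < \infty$ for all $k$. Taking logarithms, $n^k r^{\alpha_n} = \exp(k \log n + \alpha_n \log r) = \exp\bigl(\alpha_n(k \tfrac{\log n}{\alpha_n} - |\log r|)\bigr)$, writing $\log r = -|\log r|$ with $|\log r| > 0$. I would split into the two implications. If $\lim_n \frac{\log n}{\alpha_n} = 0$, then for each fixed $k$ there is $N$ with $k \frac{\log n}{\alpha_n} < \tfrac{1}{2}|\log r|$ for $n \ge N$, whence $n^k w(n) \le \exp(-\tfrac{1}{2}|\log r|\,\alpha_n) \le 1$ for large $n$; the finitely many remaining terms are harmless, so $(n^k w(n))_n$ is bounded and $w \in s$. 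Conversely, suppose $\frac{\log n}{\alpha_n} \not\to 0$. Then there is $\delta > 0$ and a subsequence $(n_j)_j$ with $\frac{\log n_j}{\alpha_{n_j}} \ge \delta$, i.e. $\alpha_{n_j} \le \tfrac{1}{\delta}\log n_j$. Pick $k \in \N$ with $k > \tfrac{|\log r|}{\delta}$ (equivalently $k\delta > |\log r|$). Then along the subsequence, $n_j^k w(n_j) = \exp(k\log n_j - |\log r|\alpha_{n_j}) \ge \exp\bigl(k \log n_j - \tfrac{|\log r|}{\delta}\log n_j\bigr) = n_j^{\,k - |\log r|/\delta} \to \infty$, so $(n^k w(n))_n$ is unbounded and $w \notin s$.

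I do not expect a genuine obstacle here; the only point requiring a little care is the bookkeeping with the sign of $\log r$ and the choice of $k$ in the converse direction, ensuring the exponent $k - |\log r|/\delta$ is strictly positive so that the subsequence blows up. Everything else is routine manipulation of exponentials and the definition of $s$.
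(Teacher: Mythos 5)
Your proposal is correct and follows essentially the same route as the paper: both arguments reduce to comparing $k\log n$ with $\alpha_n\lvert\log r\rvert$ in the exponent of $n^k r^{\alpha_n}$. The only cosmetic difference is in the ``only if'' direction, where you argue by contrapositive along a subsequence with $\frac{\log n_j}{\alpha_{n_j}}\ge\delta$, while the paper argues directly from boundedness of $(n^k w(n))_n$ for every $k$; both are sound.
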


\begin{proof}
Set $a:=(1/r)>1$. Suppose $\lim_{n\to\infty}\frac{\log n}{\alpha_n}=0$. For fixed $k \in \N$ we have
$$
n^k r^{\alpha_n} = \exp\left( \alpha_n\left(k \frac{\log n}{\alpha_n} - \log a \right)\right),\quad n\in\N,
$$
which  tends to $0$ for $n\to\infty$  since $\log (a)>0$ with $\frac{\log n}{\alpha_n} \rightarrow 0$ and $\alpha_n \uparrow \infty$. In particular, $(n^k r^{\alpha_n})_n\in c_0\su\ell^\infty$. Since $k\in\N$ is arbitrary, $w\in s$.

Assume that $w \in s$. For each $k \in \N$, it follows from the definition of $w$ that
$\lim_{n \rightarrow \infty} (\alpha_n (\log a) - k \log n) = \infty$. So, there is $n(k) \in \N$ such that $(\alpha_n (\log a) - k \log n) > k$ for $n \geq n(k)$. Given $M>0$, select $k \in \N$ with $k>M(\log a)$. Then, for $n \geq n(k)$, we have  $\alpha_n/\log n > M$. Thus, $\lim_{n\to\infty}\frac{\log n}{\alpha_n}=0$.
\end{proof}

\begin{remark}\label{R.SW}\rm Let $\alpha$ be any sequence satisfying $\alpha_n \uparrow \infty$. Define
\begin{equation}\label{e.valpha}
v(\alpha):=\inf\{\alpha_{n+1}-\alpha_{n}\colon n\in\N\}.
\end{equation}
If $v(\alpha)>0$, then \eqref{e.valpha} implies $\lim_{n\to\infty}\frac{\log n}{\alpha_n}=0$ by the Stolz-Ces\`aro criterion, \cite[Ch.3, Theorem 1.22]{M}.

(a) Whenever  $\lim_{n\to\infty}(\alpha_{n+1}-\alpha_{n})$ exists in $(0,\infty]$, then necessarily $v(\alpha)>0$.

(b) For the sequence $\alpha$ given by $\alpha_1=2$ and, for each $k\in\N$,  by $\alpha_{2k}=3k$ and $\alpha_{2k+1}=2+\alpha_{2k}$, we have $\alpha_n \uparrow \infty$ with $(\alpha_{n+1}-\alpha_{n})\in \{1,\ 2\}$, for all $n\in\N$. Note that the sequence $(\alpha_{n+1}-\alpha_{n})_n$ is \textit{not} convergent. On the other hand, $v(\alpha)=1$.

(c) Set $\alpha_n:=\sqrt{n}$, for $n\in\N$. Then  $\alpha_n \uparrow \infty$ with $(\alpha_{n+1}-\alpha_{n})=\frac{1}{\sqrt{n+1}+\sqrt{n}}\to 0$ for $n\to\infty$, i.e., $v(\alpha)=0$. Nevertheless, it is still the case that $\lim_{n\to\infty}\frac{\log n}{\alpha_n}=0$.

(d) Let $\alpha_n:=\beta\log (n+1)$, for $n\in\N$, with $\beta>0$ fixed. Then $\alpha_n \uparrow \infty$ with $(\alpha_{n+1}-\alpha_{n})=\beta \log\left(1+\frac{1}{n+1}\right)\to 0$ for $n\to\infty$, i.e., $v(\alpha)=0$. In this case, $\lim_{n\to\infty}\frac{\log n}{\alpha_n}=\frac{1}{\beta}>0$. In particular, $\Lambda_0(\alpha)$ is not nuclear.

\end{remark}

\begin{prop}\label{cont} Let $\alpha$ be any sequence with $\alpha_n\uparrow \infty$.
The Ces\`aro operator $\sC$ acts continuously on $\Lambda_0(\alpha)$ and satisfies
\begin{equation}\label{e.pb}
p_k(\sC x)\leq p_k(x),\quad  x\in \Lambda_0(\alpha),
\end{equation}
for each $k\in\N$, with $\{p_k\}_{k\in\N}$  being the norms  in \eqref{eq.seminorme}.
\end{prop}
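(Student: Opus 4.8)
The plan is to exploit the monotonicity of each weight $w_k$, which is immediate from the standing assumptions: since $\alpha_n\uparrow\infty$ and $0<r_k<1$, one has $w_k(n)=r_k^{\alpha_n}\le r_k^{\alpha_j}=w_k(j)$ whenever $j\le n$. First I would fix $x\in\Lambda_0(\alpha)$ and $k\in\N$, put $y_j:=w_k(j)|x_j|$ for $j\in\N$, and note that $(y_j)_j\in c_0$ with $\sup_j y_j=p_k(x)$ by the definition \eqref{eq.seminorme} of the norm $p_k$. Then, for every $n\in\N$,
$$
w_k(n)\,|(\sC x)_n| \;=\; w_k(n)\,\frac1n\Bigl|\sum_{j=1}^n x_j\Bigr| \;\le\; \frac1n\sum_{j=1}^n \frac{w_k(n)}{w_k(j)}\,y_j \;\le\; \frac1n\sum_{j=1}^n y_j,
$$
where the last inequality uses $w_k(n)/w_k(j)\le 1$ for $j\le n$. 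This single estimate already contains everything.

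From it, both required assertions follow at once. On the one hand, $\frac1n\sum_{j=1}^n y_j\le \sup_j y_j=p_k(x)$, which gives $w_k(n)|(\sC x)_n|\le p_k(x)$ for all $n$, hence $p_k(\sC x)\le p_k(x)$ once $\sC x\in\Lambda_0(\alpha)$ is known. On the other hand, since $(y_j)_j$ is a null sequence, its Ces\`aro means $\frac1n\sum_{j=1}^n y_j$ form a null sequence as well (the classical elementary fact about averages of convergent sequences), and therefore $w_k(n)|(\sC x)_n|\to 0$ as $n\to\infty$; as $k\in\N$ was arbitrary, this shows $\sC x\in\cap_{k\in\N}c_0(w_k)=\Lambda_0(\alpha)$. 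Hence $\sC$ maps $\Lambda_0(\alpha)$ into itself and satisfies \eqref{e.pb} for every $k$.

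Continuity of $\sC\colon\Lambda_0(\alpha)\to\Lambda_0(\alpha)$ is then automatic: the Fr\'echet topology of $\Lambda_0(\alpha)$ is generated by the increasing sequence of norms $\{p_k\}_{k\in\N}$, and the inequality $p_k(\sC x)\le p_k(x)$ holding for every $k$ and every $x$ shows that $\sC$ is in fact a contraction with respect to each $p_k$. There is no real obstacle in this argument; the only point deserving a moment's care is that verifying the null condition $w_k(n)|(\sC x)_n|\to 0$ (i.e. \emph{membership} in $\Lambda_0(\alpha)$, as opposed to mere boundedness of the weighted coordinates) genuinely uses the convergence of Ces\`aro means of a null sequence, and the monotonicity of $w_k$ coming from $\alpha_n\uparrow\infty$ is precisely what reduces the weighted average of the $|x_j|$ to the ordinary Ces\`aro average of the $y_j$.
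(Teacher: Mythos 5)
Your argument is correct and is in substance the same as the paper's: the paper simply cites Corollary 2.3(i) of \cite{ABR-9}, which states precisely that for a decreasing weight $w$ the Ces\`aro operator is a contraction on $c_0(w)$, and your elementary estimate (monotonicity of $w_k$ reducing the weighted average to the Ces\`aro mean of the null sequence $y_j=w_k(j)|x_j|$) is exactly the proof of that cited fact, applied to each $w_k$. Nothing is missing; you have merely made the reference self-contained.
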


\begin{proof}
Since $w_k=(w_k(n))_n$ is decreasing,  Corollary 2.3(i) in \cite{ABR-9} implies that
$\sC\in \cL(c_0(w_k))$ and $p_k(\sC x)\leq p_k(x)$, for $x\in c_0(w_k)$ and $k\in\N$.
\end{proof}

Let $A=(a_k)_k$ be a K\"othe matrix. Since $\lambda_0(A)={\rm proj}_{k} c_0(a_k)$ and $\lambda_0(A)$ is dense in $c_0(a_k)$ for each $k\in\N$, the Ces\`aro operator $\sC$ acts continuously on  $\lambda_0(A)$ if and only for each $k\in\N$ there exists $l>k$ such that $\sC\colon c_0(a_l)\to c_0(a_k)$, acting between Banach spaces, is continuous. Applying \cite[Theorem 4.51-C]{T} and proceeding as in the proof of Proposition 2.2(i) in \cite{ABR-9}, this turns out to be equivalent to the fact that for each $k\in\N$ there exists $l>k$ such that
\[
\sup_n \frac{a_k(n)}{n} \sum_{m=1}^n \frac{1}{a_l(m)} < \infty.
\]
If we take, for example, $a_k(n)=n^k$ (in which case $\lambda_0(A)=s$) or $a_k(n)=k^n$ for all $n,k \in \N$, then the sequence $\left(\frac{a_k(n)}{n} \sum_{m=1}^n \frac{1}{a_l(n)}\right)_n$ is unbounded and so $\sC \notin \cL(\lambda_0(A))$ for these K\"othe matrices $A$. This is why we restrict our attention to the operator $\sC$ when acting on power series spaces $\Lambda_0(\alpha)$ of finite type.

Since $\Lambda_0(\alpha)$ is Montel and $\sC\in \cL(\Lambda_0(\alpha))$, it follows that $\sC$ is always a \textit{Montel operator} (i.e., maps bounded sets to relatively compact sets). Recall that  an operator $T \in \cL(X)$, with $X$ a Fr\'echet space, is \textit{compact} if there exists a neighbourhood $U$ of $0$ such that $T(U)$ is relatively compact in $X$.

\begin{prop}\label{comp} For every sequence $\alpha$ satisfying $\alpha_n\uparrow\infty$ the corresponding Ces\`aro operator $\sC\in\cL(\Lambda_0(\alpha))$ fails to be compact.
\end{prop}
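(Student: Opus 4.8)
The plan is to argue by contradiction. Suppose $\sC$ were compact on $\Lambda_0(\alpha)$. Then there is a neighbourhood of $0$, which we may take to be $U_k:=\{x\in\Lambda_0(\alpha):p_k(x)\le1\}$ for a suitable $k\in\N$, such that $\sC(U_k)$ is relatively compact, hence bounded, in $\Lambda_0(\alpha)$; in particular $M:=\sup_{x\in U_k}p_{2k}(\sC x)<\infty$. The goal is to exhibit vectors in $U_k$ whose $\sC$-images have arbitrarily large $p_{2k}$-norm, which is absurd.

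As test vectors I would take the truncated ``extreme points'' of $U_k$: for $n\in\N$ set $x^{(n)}_j:=e^{\alpha_j/k}$ for $j\le n$ and $x^{(n)}_j:=0$ for $j>n$. Each $x^{(n)}$ is finitely supported, hence lies in $\Lambda_0(\alpha)$, and (since $w_k(j)=e^{-\alpha_j/k}$) one has $p_k(x^{(n)})=\sup_{j\le n}w_k(j)e^{\alpha_j/k}=1$, so $x^{(n)}\in U_k$ and $p_{2k}(\sC x^{(n)})\le M$. Because $(\sC x^{(n)})_n=\frac1n\sum_{j=1}^n e^{\alpha_j/k}$, estimating the $p_{2k}$-seminorm from below by its $n$-th coordinate gives
\begin{equation*}
M\ \ge\ e^{-\alpha_n/(2k)}\,\frac1n\sum_{j=1}^n e^{\alpha_j/k},\qquad n\in\N .
\end{equation*}
I would then bound the partial sum from below in two complementary ways: trivially by its last term, $\sum_{j=1}^n e^{\alpha_j/k}\ge e^{\alpha_n/k}$, and, using that $\alpha$ is increasing, by its top half, $\sum_{j=1}^n e^{\alpha_j/k}\ge\frac n2\,e^{\alpha_{\lceil n/2\rceil}/k}$. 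Substituting these yields, for every $n\in\N$,
\begin{equation*}
\alpha_n\ \le\ 2k\log(Mn)\qquad\text{and}\qquad 2\alpha_{\lceil n/2\rceil}-\alpha_n\ \le\ 2k\log(2M).
\end{equation*}

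Finally I would feed $n=2^m$ into the second estimate. Writing $b_m:=\alpha_{2^m}$ and $c:=2k\log(2M)$ it becomes $b_m-c\ge 2(b_{m-1}-c)$, so—since $b_m\to\infty$ lets us start from some index $m_0$ with $b_{m_0}>c$—iteration gives $b_m\ge c+2^{m-m_0}(b_{m_0}-c)$ for $m\ge m_0$; thus $\alpha_{2^m}$ grows at least exponentially in $m$. But the first estimate at $n=2^m$ forces $\alpha_{2^m}\le 2k\log M+(2k\log2)\,m$, which is only linear in $m$. These two bounds are incompatible for $m$ large, completing the contradiction. The step I expect to be the crux is the design of the test family: it has to work uniformly over all $\alpha$ with $\alpha_n\uparrow\infty$, covering both the regime in which $\sum_{j\le n}e^{\alpha_j/k}$ is dominated by its final term and the regime in which it is spread over many terms; employing the two lower bounds on that sum together is what makes a single family suffice and avoids splitting into the nuclear and non-nuclear cases. (One could instead argue more softly once the spectral description of $\sC$ is in hand—for instance, when $\Lambda_0(\alpha)$ is nuclear $\sC$ is invertible, so $\sC$ compact would make $I=\sC^{-1}\sC$ compact and $\Lambda_0(\alpha)$ finite-dimensional—but the estimate above is elementary and uses only Proposition \ref{cont}.)
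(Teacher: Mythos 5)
Your proof is correct, and it takes a genuinely different route from the paper's. The paper first reduces compactness of $\sC$ on $\Lambda_0(\alpha)$ to compactness of $\sC\colon c_0(w_k)\to c_0(w_l)$ between weighted Banach spaces, then invokes two external results from the authors' earlier work on weighted $c_0$ spaces: Proposition 3.9 of \cite{ABR-9} to conclude $w_k\in s$ (hence, via Lemma \ref{nuclcond}, that $\Lambda_0(\alpha)$ is nuclear), and Proposition 2.2 of \cite{ABR-9} to obtain $\sup_n \frac{w_{2k}(n)}{n}\sum_{m=1}^n w_k(m)^{-1}<\infty$, which is then contradicted using nuclearity. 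Your test vectors $x^{(n)}$ produce exactly the same quantity $e^{-\alpha_n/(2k)}\frac{1}{n}\sum_{j=1}^n e^{\alpha_j/k}$ directly from first principles (note that you only use boundedness, not relative compactness, of $\sC(U_k)$ --- which is equivalent here since $\Lambda_0(\alpha)$ is Montel), and your dyadic doubling argument combining the two lower bounds on the partial sum eliminates the need to establish nuclearity at all. What your approach buys is self-containedness and uniformity over all $\alpha$ with $\alpha_n\uparrow\infty$, at the cost of a slightly more intricate elementary estimate; what the paper's buys is brevity given the machinery of \cite{ABR-9} already in place, plus the structural insight that compactness of a single $\sC\in\cL(c_0(w_k))$ already forces nuclearity. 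All the individual steps of your argument check out: the normalization $p_k(x^{(n)})=1$, the count $n-\lceil n/2\rceil+1\geq n/2$ behind the second lower bound, and the recursion $b_m-c\geq 2(b_{m-1}-c)$ yielding exponential growth of $\alpha_{2^m}$ against the linear upper bound.
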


\begin{proof}
Suppose $\sC$ is compact. Then there is $k\in\N$ such that
$\sC\colon c_0(w_k) \rightarrow c_0(w_l)$   is compact for all $l >k$, as a linear map between Banach spaces. In particular, $\sC\colon c_0(w_k) \rightarrow c_0(w_{k+1})$ is compact. Since $c_0(w_{k+1})\su c_0(w_{k})$ continuously (via the identity map), it follows $\sC\in \cL(c_0(w_k))$ is compact. By Proposition 3.9 of \cite{ABR-9}  the weight $w_k\in s$. Hence, via Lemma \ref{nuclcond} with $r=r_k$, we have $\lim_{n\to\infty}\frac{\log n}{\alpha_n}=0$, i.e.,  $\Lambda_0(\alpha)$ is nuclear.

Since $\sC\colon c_0(w_k) \rightarrow c_0(w_{2k})$   is compact,
Proposition 2.2 of \cite{ABR-9} yields that
\begin{equation}\label{eq.comp-1}
\sup_{n\in\N} A_k(n):=\sup_{n\in\N}\frac{w_{2k}(n)}{n} \sum_{m=1}^n \frac{1}{w_k(m)}<\infty.
\end{equation}
But, for each $n\in\N$ we have
\[
A_k(n)=\frac{\exp(-\alpha_n/2k)}{n}\sum_{m=1}^n e^{\alpha_m/k}\geq \frac{1}{n}\exp(-\alpha_n/2k)\exp(\alpha_n/k)=e^{p\alpha_n-\log n}
\]
with $p=\frac{1}{2k}$. Since $\lim_{n\to\infty}\frac{\log n}{\alpha_n}=0$, there exists $N\in\N$ such that $(p\alpha_n-\log n)\geq \frac{p\alpha_n}{2}$ for all $n\geq N$. Accordingly,
$A_k(n)\geq e^{p\alpha_n/2}$, for $ n \geq N$,
which contradicts \eqref{eq.comp-1}. So, $\sC\in \cL(\Lambda_0(\alpha))$ cannot be compact.
\end{proof}

\begin{remark}\label{R.comp-1}\rm
Even though $\sC\in \cL(\Lambda_0(\alpha))$ is never compact, there do  exist $\alpha$ satisfying $\alpha_n\uparrow\infty$, even with $\Lambda_0(\alpha)$ nuclear, such that $\sC\colon c_0(w_k)\to c_0(w_k)$  is compact for \textit{every} $k\in\N$. Indeed, fix any
$0<\beta<1$ and set $\alpha_n:=\sum_{k=1}^n\frac{1}{k^\beta}$ for  $n\in\N$. Then  $1<\alpha_n<\alpha_{n+1}$ with $\alpha_n\uparrow\infty$. The Stolz-Ces\`aro criterion implies  $\lim_{n\to\infty}\frac{\log n}{\alpha_n}=0$, i.e., $\Lambda_0(\alpha)$ is nuclear.
Fix  $0<r<1$ and set $w(n): =r^{\alpha_n}$ for $n\in\N$. The Stolz-Ces\`aro criterion implies
\[
\lim_{n\to\infty}\frac{w(n)}{n}\sum_{m=1}^n \frac{1}{w(m)}=\lim_{n\to\infty}\frac{r^{\alpha_n}}{n}\sum_{m=1}^nr^{-\alpha_m}=0.
\]
By \cite[Corollary 2.3(ii)]{ABR-9}, $\sC\in \cL(c_0(w))$ is compact. In particular, $\sC\in \cL(c_0(w_k))$ is compact for every $k\in\N$.
\end{remark}

For a  lcHs $X$ and  $T\in \cL(X)$, the \textit{resolvent set} $\rho(T)$ of $T$ consists of all $\lambda\in\C$ such that $R(\lambda,T):=(\lambda I- T)^{-1}$ exists in $\cL(X)$. The set  $\sigma(T):=\C\setminus \rho(T)$ is called the \textit{spectrum} of $T$.
The \textit{point spectrum} $\sigma_{pt}(T)$ of $T$ consists of all $\lambda\in\C$ such that $(\lambda I-T)$ is not injective. If we need to stress the space $X$, then we  write $\sigma(T;X)$, $\sigma_{pt}(T;X)$ and $\rho(T;X)$. Given $\lambda, \mu\in \rho(T)$ the \textit{resolvent identity} $R(\lambda,T)-R(\mu,T)=(\mu-\lambda) R(\lambda,T)R(\mu,T)$ holds.
Unlike for Banach spaces, it may happen that $\rho(T)=\emptyset$ or that $\rho(T)$ is not open. This is why some authors prefer the subset $\rho^*(T)$ of $\rho(T)$ consisting of all $\lambda\in\C$ for which  there exists $\delta>0$ such that $B(\lambda,\delta):=\{z\in\C\colon |z -\lambda| < \delta\} \subseteq \rho(T)$ and $\{R(\mu,T) \colon \ \mu\in  B(\lambda,\delta)  \}$ is equicontinuous in $\cL(X)$. The advantage of $\rho^*(T)$, whenever it is non-empty, is that it is open and the resolvent map $R\colon \lambda\mapsto R(\lambda,T)$ is holomorphic from $\rho^*(T)$ into $\cL_b(X)$, \cite[Proposition 3.4]{ABR-5}. Define $\sigma^*(T):=\C\setminus \rho^*(T)$, which  is a closed set  containing $\sigma(T)$.  If $T\in \cL(X)$ with $X$  a Banach space, then $\sigma(T)=\sigma^*(T)$. In \cite[Remark 3.5(vi), p.265]{ABR-5}  a continuous linear operator $T$ on a Fr\'echet space $X$ is presented such that $\ov{\sigma(T)}\subset\sigma^*(T)$ properly. We now turn our attention to the spectrum of $\sC\in \cL(\Lambda_0(\alpha))$.

The Ces\`aro matrix $\sC$, when acting in $\C^{\N}$,  is similar to the diagonal matrix ${\rm diag}((\frac{1}{n})_n)$. Indeed, $\sC=\Delta {\rm diag}((\frac{1}{n})_n) \Delta$, with $\Delta=\Delta^{-1}=(\Delta_{n k})_{n, k \in \N} \in \cL (\C^\N)$  the lower triangular matrix  where, for each $n \in \N$, $\Delta_{n k}=(-1)^{k-1} \binom{n-1}{k-1}$, for $\ 1 \leq k \leq n$ and $\Delta_{n k}=0$ if $k>n$, \cite[pp.\ 247-249]{H49}. The dual operator $\sC'$ acts on the (row) vector space $\varphi$ of all finitely supported sequences via $x' \rightarrow x'\Delta' {\rm diag}((\frac{1}{n})_n) \Delta'$. Thus both operators have point spectrum $\Sigma$ and each eigenvalue $\frac{1}{n}$ has multiplicity $1$ with eigenvector $\Delta e_n$ (resp.\ $e_n' \Delta'$), for $n \in \N$. Moreover, $\lambda I - \sC$ (resp.\ $\lambda I - \sC')$ is invertible  for each $\lambda \in \C \setminus \Sigma$. If $X$ is a lcHs continuously contained in $\C^{\N}$ and $\sC(X)  \subseteq X$, then $\sigma_{pt}(\sC;X) = \{ \frac{1}{n} : n \in \N, \ \Delta e_n \in X \}$. In case $\varphi$ is densely contained in $X$, then $\varphi \subseteq X'$ and $\Sigma \subseteq \sigma_{pt}(\sC';X') \subseteq \sigma(C;X)$. These comments imply the following result; observe that always $\Delta e_1= \textbf{1}:=(1)_{n \in \N} \in \Lambda_0(\alpha)$.

\begin{lemma}\label{L-1} Let $\alpha$ be any sequence with $\alpha_n\uparrow\infty$. Then $1\in \sigma_{pt}(\sC; \Lambda_0(\alpha)) \subseteq \Sigma$ and $\Sigma \subseteq \sigma(\sC; \Lambda_0(\alpha))$.
\end{lemma}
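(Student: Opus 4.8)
The plan is to read off everything from the similarity $\sC = \Delta\,\mathrm{diag}((1/n)_n)\,\Delta$ established in the paragraph preceding the statement, together with the remark that $\Delta e_1 = \mathbf 1 \in \Lambda_0(\alpha)$ and the general principle (also stated there) that for any lcHs $X$ continuously contained in $\C^\N$ with $\sC(X)\subseteq X$ one has $\sigma_{pt}(\sC;X) = \{\tfrac1n : n\in\N,\ \Delta e_n \in X\}$, while $\varphi\subseteq X$ densely forces $\Sigma\subseteq\sigma(\sC;X)$. So the only things left to check are (a) that $\Lambda_0(\alpha)$ \emph{is} continuously and densely contained in $\C^\N$ with $\sC$ acting on it, and (b) that $\mathbf 1 = \Delta e_1 \in \Lambda_0(\alpha)$.

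First I would note that $\Lambda_0(\alpha)\subseteq\C^\N$ with continuous inclusion is immediate from the definition: each coordinate functional $x\mapsto x_n$ satisfies $|x_n| = w_k(n)^{-1}\,w_k(n)|x_n| \le w_k(n)^{-1} p_k(x)$, so the product topology on $\C^\N$ is coarser than the Fr\'echet topology of $\Lambda_0(\alpha)$. Density of $\varphi$ (finitely supported sequences) in $\Lambda_0(\alpha)$ follows because for $x\in\Lambda_0(\alpha)$ the truncations $x^{(N)} := (x_1,\dots,x_N,0,0,\dots)$ satisfy $p_k(x - x^{(N)}) = \sup_{n>N} w_k(n)|x_n| \to 0$ as $N\to\infty$ for each fixed $k$, since $w_k(n)|x_n|\to 0$. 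By Proposition \ref{cont}, $\sC$ acts continuously on $\Lambda_0(\alpha)$, so the hypotheses of the general principle are met.

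Next, $\mathbf 1 = (1)_{n}$ lies in $\Lambda_0(\alpha)$ because $p_k(\mathbf 1) = \sup_n w_k(n) = \sup_n e^{-\alpha_n/k} = e^{-\alpha_1/k} < \infty$ and, more to the point, $w_k(n) = e^{-\alpha_n/k}\to 0$ as $n\to\infty$ (using $\alpha_n\uparrow\infty$), so the defining limit condition holds for every $k$. Since $\Delta e_1 = \mathbf 1$, this shows $1\in\sigma_{pt}(\sC;\Lambda_0(\alpha))$; combined with the general identity $\sigma_{pt}(\sC;\Lambda_0(\alpha)) = \{\tfrac1n : \Delta e_n\in\Lambda_0(\alpha)\}\subseteq\Sigma$ we get $1\in\sigma_{pt}(\sC;\Lambda_0(\alpha))\subseteq\Sigma$. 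Finally, since $\varphi$ is densely contained in $\Lambda_0(\alpha)$, the cited duality argument ($\varphi\subseteq\Lambda_0(\alpha)'$ and $\Sigma\subseteq\sigma_{pt}(\sC';\Lambda_0(\alpha)')\subseteq\sigma(\sC;\Lambda_0(\alpha))$) yields $\Sigma\subseteq\sigma(\sC;\Lambda_0(\alpha))$, completing the proof.

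There is essentially no obstacle here: the real content has already been front-loaded into the discussion of the $\Delta$-similarity and the transfer principle for subspaces of $\C^\N$, and the lemma is just the bookkeeping step of verifying that $\Lambda_0(\alpha)$ satisfies those hypotheses. The only mild subtlety worth stating carefully is \emph{why} $\Delta e_n$ for $n\ge 2$ need not belong to $\Lambda_0(\alpha)$ (it has polynomially growing entries of size comparable to $n^{n-2}$), which is exactly what makes the inclusion $\sigma_{pt}(\sC;\Lambda_0(\alpha))\subseteq\Sigma$ possibly strict and motivates the later characterizations (i)--(vi); but for the present statement we only need the easy direction, namely $\Delta e_1\in\Lambda_0(\alpha)$, so nothing sharp is required.
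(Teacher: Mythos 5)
Your argument is correct and is essentially the paper's own proof: the authors simply invoke the discussion preceding the lemma (the similarity $\sC=\Delta\,\mathrm{diag}((\tfrac1n)_n)\,\Delta$, the identity $\sigma_{pt}(\sC;X)=\{\tfrac1n:\Delta e_n\in X\}$, and the duality argument for densely contained $\varphi$) together with the single observation that $\Delta e_1=\mathbf 1\in\Lambda_0(\alpha)$, and you have merely spelled out the routine verifications of those hypotheses. (One cosmetic slip in your closing aside: the $n$-th entry of $\Delta e_m$ is $\pm\binom{n-1}{m-1}$, which grows like $n^{m-1}$, not $n^{m-2}$; this plays no role in the proof.)
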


\begin{prop}\label{P-K1} For $\alpha$ with $\alpha_n\uparrow\infty$ the following assertions are equivalent.
\begin{itemize}
\item[\rm (i)]  $\Lambda_0(\alpha)$ is nuclear.
\item[\rm (ii)] $\lim_{n\to\infty}\frac{\log n}{\alpha_n}=0$.
\item[\rm (iii)] $\sigma_{pt}(\sC; \Lambda_0(\alpha))=\Sigma$.
\item[\rm (iv)] $\sigma_{pt}(\sC; \Lambda_0(\alpha))\setminus\{1\}\not=\emptyset$.
\end{itemize}
\end{prop}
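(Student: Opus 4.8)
The plan is to establish the cycle of implications (ii)$\Rightarrow$(iii)$\Rightarrow$(iv)$\Rightarrow$(ii), together with the already-recorded equivalence (i)$\Leftrightarrow$(ii) from Remark \ref{R.W1}(iii), which reduces everything to understanding when the eigenvectors $\Delta e_n$ of $\sC$ lie in $\Lambda_0(\alpha)$. By the discussion preceding Lemma \ref{L-1}, one always has $\sigma_{pt}(\sC;\Lambda_0(\alpha))=\{\tfrac1n\colon n\in\N,\ \Delta e_n\in\Lambda_0(\alpha)\}$, so assertion (iii) is equivalent to $\Delta e_n\in\Lambda_0(\alpha)$ for \emph{every} $n$, while (iv) says $\Delta e_n\in\Lambda_0(\alpha)$ for \emph{some} $n\geq 2$. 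The first step is therefore to write down the $n$-th eigenvector explicitly: from $\Delta_{mk}=(-1)^{k-1}\binom{m-1}{k-1}$ one gets that the $m$-th coordinate of $\Delta e_n$ is $(-1)^{n-1}\binom{m-1}{n-1}$, a polynomial in $m$ of degree exactly $n-1$ (and $0$ for $m<n$). So the membership question becomes: for which $n$ does the polynomially-growing sequence $(m^{n-1})_m$ (up to bounded factors) lie in $\Lambda_0(\alpha)$?

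The second step is the elementary observation that $\Lambda_0(\alpha)$ contains every polynomially bounded sequence if and only if it contains the single sequence $(m)_m$ (since it is an algebra, or more simply since $m^{j}\le m^{n-1}$ for $j\le n-1$), and that $(m^{j})_m\in\Lambda_0(\alpha)$ for all $j$ precisely when $w_k(m)m^{j}=e^{-\alpha_m/k}m^{j}\to 0$ as $m\to\infty$ for every $k,j$; by Lemma \ref{nuclcond} (applied with $r=r_k=e^{-1/k}$, so $w_k(m)=r_k^{\alpha_m}$) this holds for all $k$ exactly when $\lim_{m\to\infty}\frac{\log m}{\alpha_m}=0$. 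This gives (ii)$\Rightarrow$(iii) immediately, since then each $\Delta e_n$, being polynomially bounded, belongs to $\Lambda_0(\alpha)$, and combined with Lemma \ref{L-1} yields equality with $\Sigma$. The implication (iii)$\Rightarrow$(iv) is trivial because $\Sigma\setminus\{1\}\neq\emptyset$.

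For the remaining implication (iv)$\Rightarrow$(ii), suppose some $\Delta e_n\in\Lambda_0(\alpha)$ with $n\geq 2$. Then its $m$-th coordinate $\binom{m-1}{n-1}$, which for large $m$ is comparable to $m^{n-1}$ (indeed $\binom{m-1}{n-1}\ge \frac{(m-n+1)^{n-1}}{(n-1)!}$), gives $w_k(m)m^{n-1}\to 0$ for every $k$, i.e. $e^{-\alpha_m/k}m^{n-1}\to 0$; taking $k=1$ and $n-1\ge 1$ forces $e^{-\alpha_m}m\to 0$, which by Lemma \ref{nuclcond} (with $r=e^{-1}$, noting $w\in s$ requires only that $(mw(m))_m$ be bounded, a fortiori if it tends to $0$) yields $\lim_{m\to\infty}\frac{\log m}{\alpha_m}=0$. (Alternatively one invokes Lemma \ref{nuclcond} directly: $w_1\in s$ iff the nuclearity condition holds, and membership of a genuinely degree-$\ge 1$ polynomial sequence in $c_0(w_1)$ forces $w_1\in s$.) This closes the loop. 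The only mildly delicate point is the comparison $\binom{m-1}{n-1}\asymp m^{n-1}$ and making sure one extracts from ``$n\ge 2$'' a power of $m$ of degree at least one so that Lemma \ref{nuclcond} applies with $k=1$ (i.e. $r=1/e$); everything else is bookkeeping with the binomial coefficients and the seminorms $p_k$ from \eqref{eq.seminorme}.
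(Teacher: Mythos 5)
Your overall architecture --- reduce (iii) and (iv) to membership of the eigenvectors $\Delta e_n$ in $\Lambda_0(\alpha)$, identify $\Delta e_n$ asymptotically with the polynomial sequence $(m^{n-1})_m$, and run the cycle (ii)$\Rightarrow$(iii)$\Rightarrow$(iv)$\Rightarrow$(ii) on top of (i)$\Leftrightarrow$(ii) --- is exactly the paper's, and your (ii)$\Rightarrow$(iii) and (iii)$\Rightarrow$(iv) are correct. The gap is in (iv)$\Rightarrow$(ii). There you correctly extract $e^{-\alpha_m/k}m^{n-1}\to 0$ for \emph{every} $k$, but you then discard everything except $k=1$ and claim that $me^{-\alpha_m}\to 0$ already yields $\lim_{m\to\infty}\frac{\log m}{\alpha_m}=0$ via Lemma \ref{nuclcond}, ``noting $w\in s$ requires only that $(mw(m))_m$ be bounded''. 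That assertion is false: membership in $s$ requires $(m^jw(m))_m$ bounded for \emph{every} $j$, and the single condition $me^{-\alpha_m}\to 0$ does not imply the nuclearity criterion. Concretely, for $\alpha_m:=2+2\log m$ one has $me^{-\alpha_m}=e^{-2}/m\to 0$ while $\frac{\log m}{\alpha_m}\to\frac12$; so the inference ``$me^{-\alpha_m}\to 0\Rightarrow\lim\frac{\log m}{\alpha_m}=0$'' is invalid, and so is your alternative (``membership of a degree-$\ge 1$ polynomial sequence in $c_0(w_1)$ forces $w_1\in s$'': in this example $(m)_m\in c_0(w_1)$ but $m^3w_1(m)\to\infty$). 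The example does not contradict the proposition, because there $(m)_m\notin c_0(w_2)$, hence $(m)_m\notin\Lambda_0(\alpha)$; the information you threw away --- the estimates for $k\ge 2$ --- is precisely what is missing.

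The repair is to keep all $k$. From $m^{n-1}e^{-\alpha_m/k}\le M_k$ take logarithms to get $\frac{\log m}{\alpha_m}\le\frac{\log M_k}{(n-1)\alpha_m}+\frac{1}{k(n-1)}$; since $\alpha_m\uparrow\infty$ this gives $\limsup_{m}\frac{\log m}{\alpha_m}\le\frac{1}{k(n-1)}$ for every $k$, hence the limit is $0$. (This is exactly what the paper does.) Alternatively, your own ``algebra'' remark from the (ii)$\Rightarrow$(iii) step closes the gap: if $(m)_m\in\Lambda_0(\alpha)$, then $m^2e^{-\alpha_m/k}=\bigl(me^{-\alpha_m/2k}\bigr)^2\to 0$ for every $k$, and inductively $(m^j)_m\in\Lambda_0(\alpha)$ for all $j$, so that $w_1\in s$ and Lemma \ref{nuclcond} applies legitimately. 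Either way, the quantifier ``for every $k$'' must survive to the end of the argument; with $k=1$ alone the step fails.
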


\begin{proof} (i)$\Leftrightarrow$(ii). See Remark \ref{R.W1}(iii).

(ii)$\Rightarrow$(iii). Since $\lim_{n\to\infty}\frac{\log n}{\alpha_n}=0$,  Lemma \ref{nuclcond} implies that $w_k\in s$ for all $k\in\N$. Hence,  $\{(n^{m-1})_{n\in\N}\colon m\in\N\}\su \Lambda_0(\alpha)$. Therefore  each vector $x^{(m)}: =\Delta e_m$, $m\in\N$ (i.e.\ $x^{(m)}_n = 0$ for $1 \leq n < m$ and
$x^{(m)}_n = \frac{(-1)^m (n-1)!}{(m-1)!(n-m)!}$ for $n \geq m$), being an eigenvector of $\sC\in \cL(\C^\N)$ corresponding to the eigenvalue $\frac{1}{m}$, belongs to  $\Lambda_0(\alpha)$; see the comments prior Lemma \ref{L-1} and \cite[Proposition 2.6]{ABR-9}. This shows $\Sigma\su \sigma_{pt}(\sC; \Lambda_0(\alpha))$. Equality now follows from  Lemma \ref{L-1}.

(iii)$\Rightarrow$(iv). Obvious.

(iv)$\Rightarrow$(ii). Let $\lambda\not=1$ belong to $\sigma_{pt}(\sC; \Lambda_0(\alpha)) \subseteq \Sigma$;  see Lemma \ref{L-1}. Then $\lambda=\frac{1}{m}$ for some $m\in\N\setminus\{1\}$. Moreover,
$(n^{m-1})_{n}\in \Lambda_0(\alpha)$; see the proof of (ii) $ \Rightarrow $ (iii) above from which it is clear that $(n^{m-1})_{n \in \N}$ behaves  asymptotically like  $\Delta e_m$. Since $(m-1)>0$, given $\ve>0$ select $k\in\N$ satisfying $\frac{2}{k(m-1)}<\ve$. Recall  $w_k(n)=e^{-\alpha_n/k}$ for all $n\in\N$. So, there is $M_k>1$ such that  $n^{m-1}e^{-\alpha_n/k}\leq M_k$ for each $n\in\N$. Hence, $(m-1)\log (n)\leq \log (M_k)+\frac{\alpha_n}{k}$ for  $n\in\N$. This implies
\[
\frac{\log n}{\alpha_n}\leq \frac{\log M_k}{m-1}\cdot\frac{1}{\alpha_n}+\frac{1}{k(m-1)}, \ \ \ n\in\N.
\]
On the other hand, since $\alpha_n\uparrow\infty$ there exists $n_0\in\N$ such that $\frac{\log M_k}{\alpha_n}<\frac{1}{k}$ for all $n\geq n_0$. It follows that $
\frac{\log n}{\alpha_n}\leq \frac{2}{k(m-1)}<\ve$, for $  n\geq n_0$, which implies (ii).
\end{proof}

\begin{prop}\label{P-K2} For $\alpha$ with $\alpha_n\uparrow\infty$ the following assertions are equivalent.
\begin{itemize}
\item[\rm (i)] $0\not\in \sigma(\sC; \Lambda_0(\alpha))$.
\item[\rm (ii)] For each $k$ there exists $l>k$ such that $\sup_n (\log (n) - (\frac{1}{k}-\frac{1}{l}) \alpha_n) < \infty$.
\item[\rm (iii)] $\Lambda_0(\alpha)$ is nuclear.
\end{itemize}
\end{prop}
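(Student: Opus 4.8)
The plan is to compute $\sC^{-1}$ explicitly and to decide its continuity on $\Lambda_0(\alpha)=\mathrm{proj}_k c_0(w_k)$ by passing to the Banach steps $c_0(w_k)$. From $n(\sC x)_n=x_1+\cdots+x_n$ one reads off that $\sC$ is bijective on $\C^\N$ with inverse the bidiagonal operator $(\sC^{-1}y)_n=n y_n-(n-1)y_{n-1}$ (with $y_0:=0$). Exactly as for $\sC$ itself in the paragraph following Proposition~\ref{cont} (using that $\Lambda_0(\alpha)$ is the projective limit of the $c_0(w_k)$ and is dense in each of them), $\sC^{-1}\in\cL(\Lambda_0(\alpha))$ — i.e.\ $0\notin\sigma(\sC;\Lambda_0(\alpha))$ — holds if and only if for each $k$ there is $l>k$ with $\sC^{-1}$ restricting to a bounded map $c_0(w_l)\to c_0(w_k)$ between Banach spaces. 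So it remains to show that this property is equivalent to (ii) and that (ii) is equivalent to nuclearity.

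For (ii)$\Rightarrow$(i): fix $k$, let $l>k$ be as in (ii) with $M:=\sup_n(\log n-(\tfrac1k-\tfrac1l)\alpha_n)<\infty$, and note $m\,w_k(m)/w_l(m)=e^{\log m-(\frac1k-\frac1l)\alpha_m}\le e^M$ since $w_k(m)/w_l(m)=e^{-(\frac1k-\frac1l)\alpha_m}$. For $p_l(y)\le 1$, using $|y_m|\le 1/w_l(m)$ and the monotonicity of $w_k$, one estimates $w_k(n)|(\sC^{-1}y)_n|\le n\,w_k(n)/w_l(n)+(n-1)\,w_k(n-1)/w_l(n-1)\le 2e^M$, so $p_k(\sC^{-1}y)\le 2e^M$; since the bounding coefficients are uniformly bounded, $y\in c_0(w_l)$ also forces $\sC^{-1}y\in c_0(w_k)$. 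Hence $\sC^{-1}\colon c_0(w_l)\to c_0(w_k)$ is a well-defined bounded operator, which is (i). For (i)$\Rightarrow$(ii): with the pair $k<l$ furnished by continuity, apply the inequality $p_k(\sC^{-1}y)\le C\,p_l(y)$ to the unit vectors $y=e_n$; since $\sC^{-1}e_n=n e_n-n e_{n+1}$ and $w_k$ is decreasing, $p_k(\sC^{-1}e_n)=n\,w_k(n)$ while $p_l(e_n)=w_l(n)$, so $n\,w_k(n)\le C\,w_l(n)$, i.e.\ $\log n-(\tfrac1k-\tfrac1l)\alpha_n\le\log C$ for all $n$; this is (ii).

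For (ii)$\Leftrightarrow$(iii): by Remark~\ref{R.W1}(iii) (equivalently Lemma~\ref{nuclcond}) nuclearity of $\Lambda_0(\alpha)$ is precisely the condition $\lim_n\frac{\log n}{\alpha_n}=0$. If this holds, fix $k$, choose any $l>k$ and set $c:=\tfrac1k-\tfrac1l>0$; then $\log n-c\alpha_n<0$ for all large $n$ and is bounded on the finite remainder, so (ii) holds. Conversely, (ii) for the pair $k<l$ gives $\frac{\log n}{\alpha_n}\le\frac{M}{\alpha_n}+(\tfrac1k-\tfrac1l)$, whence, since $\alpha_n\uparrow\infty$, $\limsup_n\frac{\log n}{\alpha_n}\le\tfrac1k-\tfrac1l<\tfrac1k$; letting $k\to\infty$ gives $\lim_n\frac{\log n}{\alpha_n}=0$, i.e.\ (iii).

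The only step needing genuine care is the reduction in the first paragraph — that continuity of $\sC^{-1}$ on the Fréchet space $\Lambda_0(\alpha)$ is equivalent to boundedness of $\sC^{-1}$ between two consecutive Banach steps $c_0(w_l)\to c_0(w_k)$ — together with the small bookkeeping for the shifted weight $w_l(n-1)$ and the $n=1$ term; the remaining estimates are routine. It is worth recording (and the formulation of (ii) reflects this) that one must take $l>k$ there: for $l\le k$ one has $\tfrac1k-\tfrac1l\le 0$ and then $\log n-(\tfrac1k-\tfrac1l)\alpha_n\to+\infty$.
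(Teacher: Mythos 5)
Your proposal is correct and follows essentially the same route as the paper: the explicit bidiagonal formula for $\sC^{-1}$, reduction to the seminorm estimate between consecutive steps $c_0(w_l)\to c_0(w_k)$, testing on the unit vectors $e_n$ to get $n\,w_k(n)\le C\,w_l(n)$, and the elementary equivalence of (ii) with $\lim_n\frac{\log n}{\alpha_n}=0$. No gaps.
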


\begin{proof} (i)$\Rightarrow$(ii). Suppose that $0\not\in\sigma(\sC;\Lambda_0(\alpha))$, i.e., the inverse operator $\sC^{-1}\colon \C^\N\to \C^\N$, given by $y=(y_n)_{n\in\N}\mapsto \sC^{-1}(y)=(ny_n-(n-1)y_{n-1})_{n\in\N}$, with $y_0:=0$, is  continuous on $\Lambda_0(\alpha)$. In view of \eqref{eq.seminorme} this holds  if and only if
\begin{equation}\label{eq.con-inversa}
\forall k \ \exists l>k \ \exists D>0: \sup_n w_k(n) |n y_n - (n-1) y_{n-1} | \leq D \sup_n w_l(n) |y_n|,
\end{equation}
for all  $y \in \Lambda_0(\alpha)$. Given $n \in \N$, set $y=e_n$ to be the $n$-th canonical basis vector of $\Lambda_0(\alpha)$. Then  \eqref{eq.con-inversa} yields
$n w_k(n) = \max\{n w_k(n),nw_k(n+1)\} \leq D w_l(n)$.
Since $n\in\N$ is arbitrary, by taking logarithms  we have shown that
$$
\forall k \ \exists l>k \ \exists D>1 \ \forall n: \log(n) - \left(\frac{1}{k}-\frac{1}{l}\right) \alpha_n \leq \log D,
$$
which is precisely the condition stated in (ii).

(ii)$\Rightarrow$(i).
Recall
that $w_k(n)=\exp(- \alpha_n/k)$ for all $n,k \in \N$. Fix now
 $k\in\N$. Then there exist $l>k$ and $M>0$ with
$\log n \leq (\frac{1}{k}-\frac{1}{l}) \alpha_n + M$ for each $n \in \N$. Hence, $n w_k(n)\leq e^M w_l(n)$ for each $n \in\N$. Since $w_k$ is decreasing, this implies, for every $y\in \Lambda_0(\alpha)$ and $n\in\N$, that
\begin{eqnarray*}
& & w_k(n)|ny_n-(n-1)y_{n-1}| \leq  nw_k(n)|y_n|+(n-1)w_k(n)|y_{n-1}|\\
& & \qquad\leq  nw_k(n)|y_n|+(n-1)w_k(n-1)|y_{n-1}|\leq2 e^M \sup_n w_l(n)|y_n|.
\end{eqnarray*}
Hence,  \eqref{eq.con-inversa} is satisfied and so  $C^{-1}$ is continuous on $\Lambda_0(\alpha)$, i.e., $0\not\in\sigma(\sC;\Lambda_0(\alpha))$.

(iii)$\Rightarrow$(ii). Observe, for any given $k\in\N$, that
\[
\log (n)-\left(\frac{1}{k}-\frac{1}{k+1}\right)\alpha_n=\alpha_n\left(\frac{\log n}{\alpha_n}-\frac{1}{k(k+1)}\right),\quad n\in\N.
\]
Since $\Lambda_0(\alpha))$ is nuclear,  $\lim_{n\to\infty}\frac{\log n}{\alpha_n}=0$  and so $\left(\frac{\log n}{\alpha_n}-\frac{1}{k(k+1)}\right)<0$ for all $n$ large enough. Hence, the condition stated in (ii) follows for  $l:=(k+1)$.

(ii)$\Rightarrow$(iii). Fix $k\in\N$. Then there exist $l>k$ and $M>0$ with
\[
\log (n) \leq \left(\frac{1}{k}-\frac{1}{l}\right) \alpha_n + M<\frac{\alpha_n}{k}+M,\quad n \in \N.
\]
Hence,
$\frac{\log n}{\alpha_n}<\frac{1}{k}+\frac{M}{\alpha_n}$, for $n\in\N$.
Since $\frac{M}{\alpha_n}\to 0$, there exists $n_0\in\N$ such that $\frac{\log n}{\alpha_n}<\frac{2}{k}$, for $n\geq n_0$.
The arbitrariness of $k$ in $\N$ yields that $\lim_{n\to\infty}\frac{\log n}{\alpha_n}=0$. Hence, the condition stated in (iii) follows.
\end{proof}

The formal operator of differentiation acts on $\C^{\N}$ via
$$
D(x_1,x_2,x_3,...):=(x_2,2 x_3,3 x_4,...), \ \ \ \ x=(x_1,x_2,x_3,...) \in \C^{\N}.
$$
The inverse operator $\sC^{-1}$ of  $\sC$ then coincides with the formal differential operator $(1-z)(1 + z D(Z))$ on the algebra of all formal power series $Z=Z(z)$. The referee suggested there should be a connection between the nuclearity of $\Lambda_0(\alpha)$ and the continuity of $D$ on $\Lambda_0(\alpha)$. The following result and Remark \ref{R.diff} address this point.
Recall that $\Lambda_0 (\alpha)$ is \textit{shift stable} if $ \lim \sup_{n \to \infty } \frac{\alpha_{n+1}}{\alpha_n} < \infty $, \cite{V3}.

\begin{prop}\label{p.differentiation} For $\alpha$ with $\alpha_n\uparrow\infty$ the following assertions are equivalent.
\begin{itemize}
\item[\rm (i)] The differentiation operator $D: \Lambda_0(\alpha) \rightarrow \Lambda_0(\alpha)$ is continuous.
\item[\rm (ii)] For each $k$ there exist $l>k$ and $M>0$ such that
\begin{equation}\label{26}
    n w_k(n) \leq M w_l(n+1),   \qquad    n \in \N
\end{equation}
\item[\rm (iii)]
$\Lambda_0 (\alpha)$ is both nuclear and shift stable.
\end{itemize}
\end{prop}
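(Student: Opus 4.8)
The plan is to prove (i)$\Leftrightarrow$(ii) by the elementary continuity criterion for a linear map on a Fr\'echet space carrying an increasing fundamental system of norms, and then (ii)$\Leftrightarrow$(iii) by passing to logarithms in \eqref{26}, in exactly the spirit of the proof of Proposition \ref{P-K2}.

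For (i)$\Leftrightarrow$(ii) I would use that $\Lambda_0(\alpha)={\rm proj}_k c_0(w_k)$ carries the increasing sequence of norms $\{p_k\}$ from \eqref{eq.seminorme}, so that $D$ maps $\Lambda_0(\alpha)$ into itself continuously if and only if for every $k\in\N$ there exist $l>k$ and $M>0$ with $p_k(Dx)\le M p_l(x)$ for all $x\in\Lambda_0(\alpha)$; since $(Dx)_n=nx_{n+1}$, this is the inequality $\sup_n n w_k(n)|x_{n+1}|\le M\sup_n w_l(n)|x_n|$. Testing it on $x=e_{n+1}$ (which lies in $\Lambda_0(\alpha)$, with $De_{n+1}=n e_n$) gives precisely $n w_k(n)\le M w_l(n+1)$, i.e. \eqref{26}; conversely, \eqref{26} gives $n w_k(n)|x_{n+1}|\le M w_l(n+1)|x_{n+1}|\le M p_l(x)$ for every $n$, hence $p_k(Dx)\le M p_l(x)$.

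For (ii)$\Leftrightarrow$(iii) I would recall $w_k(n)=e^{-\alpha_n/k}$, so that on taking logarithms \eqref{26} becomes the assertion that for each $k$ there is $l>k$ with
\[
\sup_{n\in\N}\Big(\log n+\tfrac{\alpha_{n+1}}{l}-\tfrac{\alpha_n}{k}\Big)<\infty .
\]
Assuming (iii): shift stability together with $\alpha_n\uparrow\infty$ yields a constant $C\ge 1$ with $\alpha_{n+1}\le C\alpha_n$ for all $n$, so $\log n+\tfrac{\alpha_{n+1}}{l}-\tfrac{\alpha_n}{k}\le \log n+\alpha_n\big(\tfrac{C}{l}-\tfrac1k\big)$; choosing $l>2Ck$ makes the bracket smaller than $-\tfrac1{2k}$, and then $\alpha_n\big(\tfrac{\log n}{\alpha_n}-\tfrac1{2k}\big)\to-\infty$ since $\tfrac{\log n}{\alpha_n}\to 0$ by nuclearity and $\alpha_n\uparrow\infty$, so the supremum is finite. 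Conversely, assuming (ii): dropping the positive term $\tfrac{\alpha_{n+1}}{l}$ gives $\log n-\tfrac{\alpha_n}{k}\le\log M$, hence $\tfrac{\log n}{\alpha_n}\le\tfrac1k+\tfrac{\log M}{\alpha_n}$, and since $\alpha_n\uparrow\infty$ and $k$ is arbitrary this forces $\tfrac{\log n}{\alpha_n}\to0$, i.e. nuclearity; and applying (ii) with $k=1$ gives $l$ and $M>1$ with $\tfrac{\alpha_{n+1}}{l}\le\alpha_n-\log n+\log M\le\alpha_n+\log M$, whence $\tfrac{\alpha_{n+1}}{\alpha_n}\le l(1+\log M)$ using $\alpha_n>1$, so $\limsup_n\tfrac{\alpha_{n+1}}{\alpha_n}\le l<\infty$, i.e. shift stability.

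I do not expect a genuine obstacle here: the argument is essentially bookkeeping, and the only points needing care are the sign manipulations when taking logarithms in \eqref{26} and the presence of the index shift $\alpha_n\mapsto\alpha_{n+1}$. The one conceptual remark is that in (iii)$\Rightarrow$(ii) both hypotheses are genuinely used: shift stability is exactly what is needed to absorb the shifted weight $\alpha_{n+1}$ back into $\alpha_n$, while nuclearity is what defeats the extra factor $n$ produced by differentiation.
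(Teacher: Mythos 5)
Your proof is correct and follows essentially the same route as the paper: (i)$\Leftrightarrow$(ii) by testing on basis vectors and the standard continuity criterion, and (ii)$\Leftrightarrow$(iii) by taking logarithms, using monotonicity of $\alpha$ (resp.\ shift stability) to handle the index shift and $\lim_n\frac{\log n}{\alpha_n}=0$ to absorb the factor $n$. The only cosmetic difference is that you work directly with the nuclearity criterion $\frac{\log n}{\alpha_n}\to 0$, whereas the paper routes the same estimates through condition (ii) of Proposition \ref{P-K2}.
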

\begin{proof}
(i)$\Rightarrow$(ii). If $D: \Lambda_0(\alpha) \rightarrow \Lambda_0(\alpha)$ is continuous, then for each $k$ there exist $l>k$ and $M>0$ such that
$\sup_n w_k(n)|(Dx)_n| \leq M \sup_n w_l(n)|x_n|$, for each $x \in \Lambda_0(\alpha)$. Select $x: = e_j$, for $j \geq 2$. Since
$D e_j=(j-1) e_{j-1}$, it follows that $(j-1) w_k(j-1) \leq M w_l(j)$ for each $j \geq 2,$ which is precisely (ii).

(ii)$\Rightarrow$(i). Given $k$, select $l$ and $M>0$ via (ii). For $x \in \Lambda_0(\alpha)$,  $n \in \N$ we have
$$
w_k(n)|(Dx)_n| = w_k(n)n|x_{n+1}| \leq M w_l(n+1)|x_{n+1}| \leq M p_l(x), \ \ \ n \in \N,
$$
that is, $p_k (D x) \le M p_l (x)$. This shows that $ D \in \Lambda_0 (\alpha)$.

(i)$\Rightarrow$(iii).
Fix $k \in \N$. According to \eqref{26} there exist $l > k $ and $M> 1$ such that $n e^{-\alpha_n/k} \le M e^{-\alpha_{n+1}/l}
\le M e^{-\alpha_n / l}$, for $n \in \N $. Taking logarithms implies the inequality
$
\log (n) - \left(\frac 1 k - \frac 1 l \right) \!\alpha_n \le \log (M) $, for $ \in \N $.
According to Proposition \ref{P-K2} the space $ \Lambda_0(\alpha)$ must be  nuclear.

For $k = 1 $, choose $l > k $ and $\widetilde{M} > 1 $ to satisfy \eqref{26}, i.e., $n e^{- \alpha_n} \le \widetilde{M} e^{-\alpha_{n+1}/l}$
for $n \in \N $. With $\beta := \log (\widetilde{M})$ it follows that $\log (n) - \alpha_n \le \beta - \frac{\alpha_{n+1}}{l}$ which yields
the inequality
$
\frac{\alpha_{n+1}}{\alpha_n} \le l + \frac{\beta l}{\alpha_n} - \frac{\log (n)}{\alpha_n}  l $, for $n \in \N $.
Since $\lim_{n \to \infty} \frac{\log (n)}{\alpha_n} =0$, we see that $\lim \sup_{n \to \infty } \frac{\alpha_{n+1}}{\alpha_n}
< \infty $, that is, $ \Lambda_0(\alpha)$ is shift stable.

(iii)$\Rightarrow$(ii).
Fix $k \in \N $. Choose $l > k $ and $R > 1 $ with
$
\log (n) -  \left(\frac{1}{k}-\frac{1}{l}\right)  \alpha_n \le R $, for $ n \in \N$;
see Proposition \ref{P-K2}. With $ M : = e^R$ it follows that $n w_k (n) \le  M w_l(n)$ for $n \in \N$.
Since  $\Lambda_0(\alpha)$ is shift stable, there is $ s \in \N $ such that  $\alpha_{n+1} \leq s \alpha_n$ for $n \in \N$,
which implies that $w_l (n) \le w_{sl} (n+1)$ for $n \in \N$. Accordingly, with $L:= sl $ we have $n w_k(n) \leq M w_L (n+1)$
for $n \in \N$. So, (ii) satisfied.
\end{proof}

\begin{remark}\label{R.diff}\rm
There exist nuclear spaces $\Lambda_0(\alpha)$ such that $D$ is \textit{not} continuous on $\Lambda_0(\alpha)$. Indeed,
let $\alpha_n := n^n $ for $ n \in \N $. Then $\Lambda_0(\alpha)$ is nuclear but,   not shift stable.
Proposition \ref{p.differentiation} implies that $D \not\in \cL (\Lambda_0(\alpha))$. On the other hand, for $\alpha_n :=
\log (n)$, $n \in \N $, the space $\Lambda_0 (\alpha)$ is shift stable but, not nuclear; again $D \not\in \cL (\Lambda_0(\alpha))$.
\end{remark}

\begin{prop}\label{p.sprectrum} Let $\alpha$ be any sequence with $\alpha_n\uparrow \infty$.
\begin{itemize}
\item[\rm  (i)] 
Suppose that $\Lambda_0(\alpha)$ is nuclear. Then
\begin{equation*}
\sigma(\sC; \Lambda_0(\alpha))=\sigma_{pt}(\sC; \Lambda_0(\alpha))=\Sigma.
\end{equation*}
\item[\rm (ii)] If, in addition to $\Lambda_0(\alpha)$ being nuclear, also $v(\alpha)>0$, then
\begin{equation*}
\sigma^*(\sC; \Lambda_0(\alpha))=\ov{\sigma(\sC; \Lambda_0(\alpha))}=\Sigma_0.
\end{equation*}
\end{itemize}
\end{prop}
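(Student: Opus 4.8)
The plan is to determine $\sigma(\sC;\Lambda_0(\alpha))$ exactly (this gives (i)), and then to read off (ii) from (i) together with an equicontinuity estimate for the resolvent.

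\emph{Part (i).} For the lower bound, Lemma \ref{L-1} gives $\Sigma\subseteq\sigma(\sC;\Lambda_0(\alpha))$, and since $\Lambda_0(\alpha)$ is nuclear Proposition \ref{P-K1} gives $\sigma_{pt}(\sC;\Lambda_0(\alpha))=\Sigma$; as $\sigma_{pt}\subseteq\sigma$ always, it remains only to prove $\sigma(\sC;\Lambda_0(\alpha))\subseteq\Sigma$, i.e.\ $\C\setminus\Sigma\subseteq\rho(\sC;\Lambda_0(\alpha))$. Nuclearity and Proposition \ref{P-K2} already give $0\in\rho(\sC;\Lambda_0(\alpha))$, so fix $\lambda\in\C\setminus\Sigma_0$; recall (see the paragraph preceding Lemma \ref{L-1}) that $\lambda I-\sC$ is a bijection of $\C^\N$. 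Writing $s_n:=x_1+\dots+x_n$ one checks that $(\lambda I-\sC)x=y$ is equivalent to $(\lambda n-1)s_n=\lambda n\,s_{n-1}+n y_n$ ($s_0:=0$) together with $x_n=\frac{1}{\lambda}(y_n+\frac{s_n}{n})$, hence
\[
s_n=\sum_{k=1}^{n}\Big(\prod_{j=k+1}^{n}\frac{\lambda j}{\lambda j-1}\Big)\frac{k\,y_k}{\lambda k-1},\qquad n\in\N .
\]
The decisive elementary estimate is that there are constants $C_\lambda, M_\lambda\geq 1$ with $\big|\prod_{j=k+1}^n\frac{\lambda j}{\lambda j-1}\big|\leq C_\lambda (n/k)^{\rho}$ and $\frac{k}{|\lambda k-1|}\leq M_\lambda$ for all $1\leq k\leq n$, where $\rho:=\max\{0,\RR(1/\lambda)\}$; the first follows by writing $\log\big|\prod_{j=k+1}^n\frac{\lambda j}{\lambda j-1}\big|=-\sum_{j=k+1}^n\log|1-\frac{1}{\lambda j}|$, treating the finitely many $j$ with $|\lambda j|$ small separately (no $\lambda j$ equals $1$) and using $\log|1-\frac{1}{\lambda j}|=-\RR(1/\lambda)/j+O(1/j^2)$ for the rest. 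Now fix $m\in\N$, pick $l>m$ and set $\delta:=\frac1m-\frac1l>0$. Since $y\in\Lambda_0(\alpha)$ we have $|y_k|\leq p_l(y)e^{\alpha_k/l}\leq p_l(y)e^{\alpha_n/l}$ for $k\leq n$, and $\sum_{k=1}^n k^{-\rho}\leq n$; combining these with the two displayed relations gives
\[
w_m(n)|x_n|\leq C_\lambda'\, p_l(y)\,(1+n^{\rho})\,e^{-\alpha_n\delta},\qquad n\in\N ,
\]
for a constant $C_\lambda'$ depending only on $\lambda$. Because $\Lambda_0(\alpha)$ is nuclear we have $\log n/\alpha_n\to0$ (Remark \ref{R.W1}(iii)), so $(1+n^{\rho})e^{-\alpha_n\delta}=\exp(O(\log n)-\delta\alpha_n)\to0$; hence $w_m(n)|x_n|\to0$ for every $m\in\N$, i.e.\ $x\in\Lambda_0(\alpha)$. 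Thus $\lambda I-\sC$ is a continuous bijection of $\Lambda_0(\alpha)$, so by the open mapping theorem $(\lambda I-\sC)^{-1}\in\cL(\Lambda_0(\alpha))$ and $\lambda\in\rho(\sC;\Lambda_0(\alpha))$. This proves $\sigma(\sC;\Lambda_0(\alpha))=\sigma_{pt}(\sC;\Lambda_0(\alpha))=\Sigma$.

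\emph{Part (ii).} By (i), $\ov{\sigma(\sC;\Lambda_0(\alpha))}=\ov\Sigma=\Sigma_0$, and since $\sigma^*(\sC;\Lambda_0(\alpha))$ is closed and contains $\sigma(\sC;\Lambda_0(\alpha))=\Sigma$, it contains $\Sigma_0$. For the reverse inclusion, let $\lambda\in\C\setminus\Sigma_0$; as $\Sigma_0$ is compact there is $\delta>0$ with $K:=\ov{B(\lambda,\delta)}$ disjoint from $\Sigma_0$, so $B(\lambda,\delta)\subseteq\C\setminus\Sigma=\rho(\sC;\Lambda_0(\alpha))$ by (i). For $\mu\in K$ the resolvent $R(\mu,\sC)$ is given by the formulas above with $\mu$ in place of $\lambda$, and since $0\notin K$ and $\frac1\mu\notin\N$ for $\mu\in K$, the quantities $\frac{1}{|\mu|}$, $\frac{k}{|\mu k-1|}$ and $\big|\prod_{j=k+1}^n\frac{\mu j}{\mu j-1}\big|(k/n)^{\rho_K}$ are bounded uniformly in $\mu\in K$ and $1\leq k\leq n$, where $\rho_K:=\max\{0,\max_{\mu\in K}\RR(1/\mu)\}$ (the product bound being the uniform version of the one used in (i)). Repeating the estimate of Part (i) with these uniform constants yields: for each $k\in\N$ there are $l>k$ and $C>0$ with $p_k(R(\mu,\sC)x)\leq C\,p_l(x)$ for all $\mu\in K$ and all $x\in\Lambda_0(\alpha)$ — again $\log n/\alpha_n\to0$ is what makes $\sup_n(1+n^{\rho_K})e^{-\alpha_n\delta'}$ finite. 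Hence $\{R(\mu,\sC):\mu\in B(\lambda,\delta)\}$ is equicontinuous, so $\lambda\in\rho^*(\sC;\Lambda_0(\alpha))$ and $\sigma^*(\sC;\Lambda_0(\alpha))\subseteq\Sigma_0$. Together with the reverse inclusion this gives $\sigma^*(\sC;\Lambda_0(\alpha))=\ov{\sigma(\sC;\Lambda_0(\alpha))}=\Sigma_0$.

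The step I expect to cost the most work is the uniform product estimate $\big|\prod_{j=k+1}^n\frac{\mu j}{\mu j-1}\big|\leq C_K(n/k)^{\rho_K}$: the small indices $j$ (where the factors are only bounded away from $0$ and $\infty$, uniformly on the compact set $K$ because $\mu j\neq 1$ there) must be separated from the tail, on which the asymptotics $-\log|1-\frac{1}{\mu j}|=\RR(1/\mu)/j+O(1/j^2)$ have to be controlled uniformly in $\mu\in K$. Once this is in hand, the only genuinely analytic point in both parts is that the polynomial factor $n^{\rho}$ produced by this estimate is absorbed by the weight $e^{-\alpha_n\delta}$, which is exactly what nuclearity ($\log n/\alpha_n\to0$) delivers. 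The extra hypothesis $v(\alpha)>0$ in (ii) forces $\alpha_n\geq\alpha_1+(n-1)v(\alpha)$, so the weights $e^{\alpha_n/k}$ grow at least geometrically and each sum $\sum_{k\leq n}k^{-\rho}e^{\alpha_k/b}$ is dominated, up to a constant, by its last term; this renders the uniform bounds in the equicontinuity argument particularly transparent.
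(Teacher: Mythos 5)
Your argument is correct, but it is organized differently from the paper's, and the difference is most substantial in part (ii). For part (i) the analytic core is the same: your product $\prod_{j=k+1}^{n}\frac{\lambda j}{\lambda j-1}$ is exactly the reciprocal of the quantity the paper controls via the entry estimates $|e_{nm}|\asymp n^{\RR(1/\lambda)-1}m^{-\RR(1/\lambda)}$ imported from their earlier work, and in both proofs nuclearity ($\log n/\alpha_n\to 0$) is what absorbs the resulting polynomial factor $n^{\rho}$ into $e^{-(\frac1m-\frac1l)\alpha_n}$. The paper instead takes Reade's explicit matrix for $(\sC-\lambda I)^{-1}$, splits it as a diagonal operator minus a lower-triangular one, conjugates by the isometries $c_0(w_k)\cong c_0$, and verifies Taylor's boundedness criterion for $c_0$-matrices (with a case distinction $\RR(1/\lambda)<1$ versus $\geq 1$); your recursion for the partial sums $s_n$ reaches the same estimate self-containedly and with a cruder but sufficient bound $\sum_{k\le n}k^{-\rho}\le n$. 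In part (ii) your route is genuinely different: the paper uses $v(\alpha)>0$ to get $\limsup_n w_k(n+1)/w_k(n)<1$, hence $\sigma(\sC;c_0(w_k))=\Sigma_0$ for every $k$ by a Banach-space result, and then applies a projective-limit lemma identifying $\sigma^*$ with $\ov{\cup_k\sigma(\sC;c_0(w_k))}$; you instead prove equicontinuity of $\{R(\mu,\sC):\mu\in K\}$ directly by making the part-(i) estimates uniform on a compact set $K$ disjoint from $\Sigma_0$ (which is legitimate: $\mathrm{dist}(K,\Sigma_0)>0$ gives the uniform bound on $k/|\mu k-1|$, and the uniform product estimate follows as you describe). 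Notably, your argument for (ii) never actually uses $v(\alpha)>0$ — as you yourself observe, it only makes the bounds ``transparent'' — so you have in fact proved the stronger statement that $\sigma^*(\sC;\Lambda_0(\alpha))=\Sigma_0$ for every nuclear $\Lambda_0(\alpha)$. This does not contradict the paper: their hypothesis is needed only to pin down the individual Banach-space spectra $\sigma(\sC;c_0(w_k))$, a step your direct equicontinuity argument bypasses entirely, since equicontinuity only requires the resolvents to map $c_0(w_{m+1})$ into $c_0(w_m)$ uniformly, not to act in each $c_0(w_k)$ separately.
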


\begin{proof} (i) By Proposition \ref{P-K1} we have  $\Sigma= \sigma_{pt}(\sC; \Lambda_0(\alpha))$. Thus, $\Sigma\su \sigma(\sC; \Lambda_0(\alpha))$.  Moreover, $0 \notin \sigma(\sC; \Lambda_0(\alpha))$ by Proposition \ref{P-K2}. It remains to verify that $\lambda \notin
\sigma(\sC; \Lambda_0(\alpha))$ for each $\lambda \notin \Sigma_0$. The proof follows the lines of that of Theorem 3.4, Step 4, in \cite{ABR-9}.
Fix $\lambda\in \C\setminus\Sigma_0$, in which case $\lambda\in \rho(\sC; \C^N)$. We recall the formula for the inverse operator $(\sC-\lambda I)^{-1}\colon \C^\N\to \C^\N$, \cite[p.266]{R}. For $n\in\N$ the $n$-th row of the matrix for $(\sC-\lambda I)^{-1}$ has the entries
\[
\frac{-1}{n\lambda^2\prod_{k=m}^n\left(1-\frac{1}{\lambda k}\right)},\quad 1\leq m<n,
\]
\[
\frac{n}{1-n\lambda}=\frac{1}{\frac{1}{n}-\lambda}, \quad m=n,
\]
and all the other entries in row $n$ are equal to $0$. So,
we can write
\begin{equation}\label{e:dec}
(\sC-\lambda I)^{-1}=D_\lambda-\frac{1}{\lambda^2}E_\lambda,
\end{equation}
where the diagonal operator $D_\lambda=(d_{nm})_{n,m\in\N}$ is given by $d_{nn}:=\frac{1}{\frac{1}{n}-\lambda}$ and $d_{nm}:=0$ if $n\not=m$. The operator $E_\lambda=(e_{nm})_{n,m\in\N}$ is then the lower triangular matrix with $e_{1m}=0$ for all $m\in\N$, and for every $n\geq 2$ with $e_{nm}:=\frac{1}{n\prod_{k=m}^n\left(1-\frac{1}{\lambda k}\right)}$ if $1\leq m<n$ and $e_{nm}:=0$ if $m\geq n$.

Since $\lambda\not\in \Sigma_0$, it follows from \eqref{eq.seminorme} that $D_\lambda\in \cL(\Lambda_0(\alpha))$.
By \eqref{e:dec} it  remains to show that $E_\lambda\in \cL(\C^\N)$ maps $\Lambda_0(\alpha)$ continuously into $\Lambda_0(\alpha)$.
To this end we observe, for every $k\in\N$, that $c_0(w_k)$ is
isometrically isomorphic to $c_0$ via the linear multiplication operator $\Phi_k\colon c_0(w_k)\to c_0$ given by $\Phi_k(x):=(w_k(n)x_n)_n$, for $x=(x_n)_n\in c_0(w_k)$. If we can show that  ${E}_{\lambda}$ maps $c_0(w_{k+1})$ into $c_0(w_k)$ continuously, for all $k\in\N$, then $E_\lambda$ will map $\Lambda_0(\alpha)$ into itself continuously. So, it suffices to show that    $\tilde{E}_{\lambda,k}:=\Phi_kE_\lambda \Phi_{k+1}^{-1}\in \cL(c_0)$ for all $k\in\N$.

Fix $k\in\N$. Now, $\tilde{E}_{\lambda,k}$
 is  the restriction to $c_0$ of  the operator on $\C^\N$ given by
\[
(\tilde{E}_{\lambda,k}(x))_n=w_k(n)\sum_{m=1}^{n-1}\frac{e_{nm}}{w_{k+1}(m)}x_m, \quad x\in \C^\N,\ n\in\N,
\]
with $(\tilde{E}_{\lambda,k}(x))_1:=0$.
Observe that $\tilde{E}_{\lambda,k}=(\tilde{e}^{(k)}_{nm})_{n,m\in\N}$ is the lower triangular matrix given by $\tilde{e}^{(k)}_{1m}=0$ for $m\in\N$ and $\tilde{e}^{(k)}_{nm}=\frac{w_k(n)}{w_{k+1}(m)}e_{nm}$ for $n\geq 2$ and $m\in\N$.
 So, we need to verify  that $\tilde{E}_{\lambda,k}\in \cL(c_0)$. To prove this, set $\alpha:={\rm Re}\left(\frac{1}{\lambda }\right)$.
Since $\lambda\in \C\setminus\Sigma_0$, by both Lemma 3.3 and the proof of Step 4 in the proof of Theorem 3.4 in \cite{ABR-9} there exist $c>0$ and $C>0$ such that
\begin{eqnarray}\label{e.Boun}
& &\label{e.Boun1} \frac{c}{n^{1-\alpha}}\leq |e_{n1}|\leq \frac{C}{n^{1-\alpha}},\quad n\geq 2,\\
& &\label{e.Boun2}\frac{c}{n^{1-\alpha}m^\alpha}\leq  |e_{nm}|\leq \frac{C}{n^{1-\alpha}m^\alpha},\quad 2\leq m<n.
\end{eqnarray}
So,  by \cite[Theorem 4.51-C]{T} to prove that $\tilde{E}_{\lambda,k}\in \cL(c_0)$ we need to verify, for each $k\in\N$, that the following two conditions are satisfied:
\begin{itemize}
\item[\rm (a)] $\lim_{n\to\infty}\tilde{e}^{(k)}_{nm}=0$, for each $m\in\N$ and
\item[\rm (b)] $\sup_{n\in\N}\sum_{m=1}^\infty |\tilde{e}^{(k)}_{nm}|<\infty$.
\end{itemize}
First observe that \eqref{e.Boun1} and \eqref{e.Boun2} imply for every $m,\ n\in\N$ that
\[
|\tilde{e}^{(k)}_{nm}|=\frac{w_k(n)}{w_{k+1}(m)}|e_{nm}|\leq C'_m \frac{r_k^{\alpha_n}}{n^{1-\alpha}}.
\]
But,  for each fixed $m\in\N$, we have via Lemma \ref{nuclcond} that $\frac{r_k^{\alpha_n}}{n^{1-\alpha}}\to 0$ for $n\to\infty$ and hence, condition (a) is satisfied.

Next, fix $k\in\N$.  Then \eqref{e.Boun1} and \eqref{e.Boun2} imply, for every $n\in\N$, that
\begin{equation}\label{eq.condb}
\sum_{m=1}^\infty |\tilde{e}^{(k)}_{nm}|=\sum_{m=1}^{n-1}\frac{w_k(n)}{w_{k+1}(m)}|e_{nm}|\leq C\frac{1}{n}\sum_{m=1}^{n-1}\frac{r_k^{\alpha_n}}{r_{k+1}^{\alpha_m}}\frac{n^\alpha}{m^\alpha}.
\end{equation}
Suppose first that $\alpha<1$. Since $0<r_{k+1}<1$ and $(\alpha_n)_n$ is an increasing sequence, we have $r_{k+1}^{\alpha_m}\geq r_{k+1}^{\alpha_n}$ for $n\in\N$ and $m=1,\ldots, n-1$, and so $\frac{r_k^{\alpha_n}}{r_{k+1}^{\alpha_m}}\leq \left(\frac{r_k}{r_{k+1}}\right)^{\alpha_n}<1$ for $n\in\N$ and $m=1,\ldots, n-1$. It follows for every $n\in\N$ that
\begin{equation}\label{eq.condb1}
\frac{1}{n}\sum_{m=1}^{n-1}\frac{r_k^{\alpha_n}}{r_{k+1}^{\alpha_m}}\frac{n^\alpha}{m^\alpha}\leq \frac{1}{n}\sum_{m=1}^{n-1}\frac{n^\alpha}{m^\alpha}\leq \frac{1}{n^{1-\alpha}}\sum_{m=1}^{n-1}\frac{1}{m^\alpha}\leq \max\left\{1,\frac{1}{1-\alpha}\right\}<\infty
\end{equation}
whenever $\alpha<1$; see the proof of Corollary 3.6 in \cite{ABR-9}. So,  \eqref{eq.condb} and \eqref{eq.condb1} ensure that  condition (b) is surely satisfied if $\alpha<1$.

Consider now $\alpha\geq 1$. Then,  for every $n\in\N$, we have (as $\frac{1}{r_{k+1}}>0$) that
\begin{eqnarray}\label{eq.condb2}
& & \frac{1}{n}\sum_{m=1}^{n-1}\frac{r_k^{\alpha_n}}{r_{k+1}^{\alpha_m}}\frac{n^\alpha}{m^\alpha}
=n^{\alpha-1}r_k^{\alpha_n}\sum_{m=1}^{n-1}\left(\frac{1}{r_{k+1}}\right)^{\alpha_m}\frac{1}{m^\alpha}\nonumber\\
& & \leq
n^{\alpha-1}r_k^{\alpha_n}(n-1)\left(\frac{1}{r_{k+1}}\right)^{\alpha_n}\leq n^\alpha\left(\frac{r_k}{r_{k+1}}\right)^{\alpha_n}.
\end{eqnarray}
But, by Lemma \ref{nuclcond}, $n^\alpha\left(\frac{r_k}{r_{k+1}}\right)^{\alpha_n}\to 0$ for $n\to\infty$ (because $0<\frac{r_k}{r_{k+1}}<1$ and $\frac{\log n}{\alpha_n}\to 0$ for $n\to\infty$ via Remark \ref{R.W1}(iii)). So,  \eqref{eq.condb} and \eqref{eq.condb2} ensure that  condition (b) is also satisfied if $\alpha\geq 1$.

(ii) According to part (i), $\sigma(\sC; \Lambda_0(\alpha))=\Sigma$. For $k\in\N$ fixed, $\frac{w_k(n+1)}{w_k(n)}=r_k^{\alpha_{n+1}-\alpha_n}\leq r_k^{v(\alpha)}$ for all $n\in\N$. Hence,
\[
\limsup_{n}\frac{w_k(n+1)}{w_k(n)}\leq r_k^{v(\alpha)}<1.
\]
It follows from \cite[Proposition 2.7]{ABR-9} that $\sigma(\sC; c_0(w_k))=\Sigma_0$. Accordingly,
\[
\cup_{k\in\N}\sigma(\sC; c_0(w_k))=\Sigma_0=\ov{\Sigma}=\ov{\sigma(\sC; \Lambda_0(\alpha))}.
\]
Now \cite[Lemma 2.1]{ABR-1} implies that $\sigma^*(\sC; \Lambda_0(\alpha))=\ov{\sigma(\sC; \Lambda_0(\alpha))}=\Sigma_0$.
\end{proof}

\begin{corollary}\label{C.Su-1} For  $\alpha$ with $\alpha_n\uparrow\infty$ the following assertions are equivalent.
\begin{itemize}
\item[\rm (i)]  $\Lambda_0(\alpha)$ is nuclear.
\item[\rm (ii)]  $\sigma(\sC; \Lambda_0(\alpha))=\sigma_{pt}(\sC; \Lambda_0(\alpha))$.
\item[\rm (iii)]  $\sigma(\sC; \Lambda_0(\alpha))=\Sigma$.
\end{itemize}
\end{corollary}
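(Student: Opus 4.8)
The plan is to derive Corollary \ref{C.Su-1} as an essentially immediate consequence of the results already assembled in this section, chaining together Propositions \ref{P-K1}, \ref{P-K2} and \ref{p.sprectrum}. The three assertions (i), (ii), (iii) should form a cycle of implications, and the cleanest route is (i)$\Rightarrow$(iii)$\Rightarrow$(ii)$\Rightarrow$(i).

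For (i)$\Rightarrow$(iii): assuming $\Lambda_0(\alpha)$ is nuclear, Proposition \ref{p.sprectrum}(i) gives directly $\sigma(\sC;\Lambda_0(\alpha))=\sigma_{pt}(\sC;\Lambda_0(\alpha))=\Sigma$, so in particular $\sigma(\sC;\Lambda_0(\alpha))=\Sigma$. For (iii)$\Rightarrow$(ii): if $\sigma(\sC;\Lambda_0(\alpha))=\Sigma$, then since Lemma \ref{L-1} always yields $1\in\sigma_{pt}(\sC;\Lambda_0(\alpha))\su\Sigma$ and $\Sigma\su\sigma(\sC;\Lambda_0(\alpha))$, the hypothesis forces $\Sigma\s\{1\}\su\Sigma=\sigma(\sC;\Lambda_0(\alpha))$; more to the point, $0\notin\Sigma=\sigma(\sC;\Lambda_0(\alpha))$, so by Proposition \ref{P-K2} the space $\Lambda_0(\alpha)$ is nuclear, and then Proposition \ref{p.sprectrum}(i) again gives $\sigma(\sC;\Lambda_0(\alpha))=\sigma_{pt}(\sC;\Lambda_0(\alpha))$, which is (ii). (Alternatively one can argue (iii)$\Rightarrow$(i) first via $0\notin\Sigma$ and Proposition \ref{P-K2}, and then invoke Proposition \ref{p.sprectrum}(i) to land on (ii); the two orderings are interchangeable.)

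For (ii)$\Rightarrow$(i): suppose $\sigma(\sC;\Lambda_0(\alpha))=\sigma_{pt}(\sC;\Lambda_0(\alpha))$. By Remark \ref{R.S-C} (or Lemma \ref{L-1}) we have $\Sigma\su\sigma(\sC;\Lambda_0(\alpha))$, and by Lemma \ref{L-1} the point spectrum is contained in $\Sigma$, so the equality forces $\sigma(\sC;\Lambda_0(\alpha))=\sigma_{pt}(\sC;\Lambda_0(\alpha))=\Sigma$. In particular $\sigma_{pt}(\sC;\Lambda_0(\alpha))=\Sigma$, so condition (iii) of Proposition \ref{P-K1} holds, whence $\Lambda_0(\alpha)$ is nuclear.

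I do not expect a genuine obstacle here: the corollary is a bookkeeping exercise that repackages the equivalences already proved. The only point requiring a modicum of care is making sure that in the step ``$\sigma=\sigma_{pt}$ implies $\sigma=\Sigma$'' one explicitly uses both inclusions $\Sigma\su\sigma(\sC;\Lambda_0(\alpha))$ and $\sigma_{pt}(\sC;\Lambda_0(\alpha))\su\Sigma$ from Lemma \ref{L-1}, rather than silently assuming the spectrum is already known to be $\Sigma$; once that is spelled out, everything reduces to Propositions \ref{P-K1} and \ref{P-K2}.
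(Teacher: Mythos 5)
Your proof is correct and uses essentially the same ingredients as the paper: Proposition \ref{p.sprectrum}(i) for the implications out of nuclearity, Lemma \ref{L-1} plus Proposition \ref{P-K1} for (ii)$\Rightarrow$(i), and the observation $0\notin\Sigma$ plus Proposition \ref{P-K2} for (iii)$\Rightarrow$nuclearity. The only difference is cosmetic (you arrange the three statements in a single cycle, whereas the paper proves (i)$\Leftrightarrow$(ii) and (i)$\Leftrightarrow$(iii) separately), and note that the inclusion $\Sigma\su\sigma(\sC;\Lambda_0(\alpha))$ comes from Lemma \ref{L-1}, not from Remark \ref{R.S-C}.
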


\begin{proof} (i)$\Rightarrow$(ii). This is part of Proposition \ref{p.sprectrum}(i).

(ii)$\Rightarrow$(i). The equality in (ii) together with Lemma \ref{L-1} imply that $\Sigma\su \sigma_{pt}(\sC; \Lambda_0(\alpha))$ and hence, by Proposition \ref{P-K1}, the space $\Lambda_0(\alpha)$ is nuclear.

(i)$\Rightarrow$(iii). Clear from Proposition \ref{p.sprectrum}(i).

(iii)$\Rightarrow$(i). The equality in (iii) implies that $0\not\in\sigma(\sC;\Lambda_0(\alpha))$ and so $\Lambda_0(\alpha)$ is nuclear; see Proposition \ref{P-K1}.
\end{proof}

The identity $\sC= \Delta \mbox{diag} ((\frac 1 n)_n)  \Delta $ holds in $\cL (\C^\N)$ with all three operators $\sC, \Delta$ and $\mbox{diag} ((\frac 1 n)_n)$ continuous;
see the discussion prior to Lemma \ref{L-1}. For every sequence $\alpha_n \uparrow \infty $, both of the operators $\sC$ and $\mbox{diag} ((\frac 1 n )_n) $ also
belong to $\cL (\Lambda_0 (\alpha))$. Since the columns $\{\Delta e_n : n \in \N \}$ of $\Delta $ are the distinct eigenvectors of $\sC \in \cL (\C^N)$,
it follows from the discussion prior to Lemma \ref{L-1} and Corollary \ref{C.Su-1} that a necessary condition for $\Delta$ to act in $\Lambda_0 (\alpha)$
is the nuclearity of $\Lambda_0 (\alpha)$. However, this condition alone is not sufficient for the continuity of $\Delta$.

\begin{prop} \label{pr14}
Let $\Lambda_0(\alpha)$ be nuclear. The following assertions are equivalent.
\begin{enumerate}
  \item[\rm (i)]
  $\Delta : \Lambda_0(\alpha) \to \Lambda_0(\alpha)$ is continuous.
  \item[\rm (ii)]
 For each $k \in \N $ there exists $l \ge k$ such that
 \begin{equation}\label{new}
  \sup_n \sum^n_{m=1}  \frac{w_k (n)}{w_l (m)}
\binom{n-1}{m-1}  < \infty .
  \end{equation}

 \item[\rm (iii)]
 $\lim_{n \to \infty } \frac{n}{\alpha_n} =0$.
\end{enumerate}
\end{prop}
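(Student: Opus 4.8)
The plan is to treat (i)$\Leftrightarrow$(ii) by the standard reduction to the Banach step spaces $c_0(w_k)$, and then (ii)$\Leftrightarrow$(iii) by elementary estimates on binomial sums. For (i)$\Leftrightarrow$(ii): arguing exactly as in the discussion following Proposition \ref{cont} (using $\Lambda_0(\alpha)={\rm proj}_k c_0(w_k)$ and the density of $\varphi$, hence of $\Lambda_0(\alpha)$, in each $c_0(w_k)$), the operator $\Delta$ acts continuously on $\Lambda_0(\alpha)$ if and only if for every $k\in\N$ there is $l>k$ such that $\Delta$ extends to a continuous linear map $c_0(w_l)\to c_0(w_k)$; the matrix representation of $\Delta$ survives this extension because the coordinate functionals are continuous on $c_0(w_k)$ (approximate $y\in c_0(w_l)$ by its truncations, which lie in $\varphi$). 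Conjugating by the isometric multiplication isomorphisms $\Phi_j\colon c_0(w_j)\to c_0$, $\Phi_j(x):=(w_j(n)x_n)_n$, as in the proof of Proposition \ref{p.sprectrum}, this becomes $\widetilde\Delta_k:=\Phi_k\Delta\Phi_l^{-1}\in\cL(c_0)$, where $\widetilde\Delta_k$ is the lower triangular matrix with entries $\frac{w_k(n)}{w_l(m)}(-1)^{m-1}\binom{n-1}{m-1}$ for $1\le m\le n$. By \cite[Theorem 4.51-C]{T} this holds iff (a) $\lim_n\frac{w_k(n)}{w_l(m)}\binom{n-1}{m-1}=0$ for each $m$, and (b) $\sup_n\sum_{m=1}^n\frac{w_k(n)}{w_l(m)}\binom{n-1}{m-1}<\infty$. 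Now (a) is automatic: as $\Lambda_0(\alpha)$ is nuclear, Lemma \ref{nuclcond} (with $r=r_k$) gives $w_k\in s$, and $\binom{n-1}{m-1}\le n^{m-1}$, so for each fixed $m$ the general term is at most $\frac{1}{w_l(m)}\,n^{m-1}w_k(n)\to 0$. Hence continuity of $\Delta$ is equivalent to (b), i.e.\ to (ii).

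For (iii)$\Rightarrow$(ii): fix $k\in\N$ and take $l:=k+1$. Since $\alpha_n\uparrow\infty$, for $1\le m\le n$ we have $\frac{w_k(n)}{w_l(m)}=e^{-\alpha_n/k+\alpha_m/l}\le e^{-(\frac1k-\frac1l)\alpha_n}$, so, using $\sum_{m=1}^n\binom{n-1}{m-1}=2^{n-1}$,
\[
\sum_{m=1}^n\frac{w_k(n)}{w_l(m)}\binom{n-1}{m-1}\ \le\ 2^{n-1}e^{-\frac{1}{k(k+1)}\alpha_n}\ =\ \exp\!\left(\alpha_n\left(\frac{(n-1)\log 2}{\alpha_n}-\frac{1}{k(k+1)}\right)\right).
\]
By (iii) we have $\frac{(n-1)\log 2}{\alpha_n}\to 0$, so the exponent tends to $-\infty$ and the supremum over $n$ is finite; this is (ii).

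For (ii)$\Rightarrow$(iii): fix $k\in\N$ and let $l\ge k$ and $C>0$ witness (ii). Since $\alpha_m>0$ we have $e^{\alpha_m/l}\ge 1$ for all $m$, whence
\[
C\ \ge\ \sum_{m=1}^n\frac{w_k(n)}{w_l(m)}\binom{n-1}{m-1}\ =\ e^{-\alpha_n/k}\sum_{m=1}^n e^{\alpha_m/l}\binom{n-1}{m-1}\ \ge\ e^{-\alpha_n/k}\,2^{n-1},\qquad n\in\N.
\]
Taking logarithms and dividing by $\alpha_n$ gives $\frac{(n-1)\log 2}{\alpha_n}\le\frac1k+\frac{\log C}{\alpha_n}$ for all $n$, so $\limsup_n\frac{n}{\alpha_n}\le\frac{1}{k\log 2}$. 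As $k\in\N$ is arbitrary, $\lim_n\frac{n}{\alpha_n}=0$, which is (iii). (In particular (ii), like (iii), already forces $\Lambda_0(\alpha)$ to be nuclear, so the standing nuclearity hypothesis is really needed only for the implication (i)$\Rightarrow$(ii).)

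The main obstacle is confined to (i)$\Leftrightarrow$(ii): one must check that the Toeplitz condition (a) becomes automatic under nuclearity, so that continuity of $\Delta$ reduces to (b) alone rather than to (a) together with (b), and one must verify that the matrix formula for $\Delta$ passes to the Banach space completions $c_0(w_l)\to c_0(w_k)$. The estimates in (ii)$\Leftrightarrow$(iii) are deliberately crude — they use only $e^{\alpha_m/l}\ge 1$ and $\alpha_m\le\alpha_n$ for $m\le n$ — and they suffice precisely because $n/\alpha_n\to 0$ forces $\alpha_n$ to dominate every linear function of $n$, outweighing the merely exponential factor $2^{n-1}$.
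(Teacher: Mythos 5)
Your proof is correct and follows essentially the same route as the paper's: the reduction of (i) to the $c_0$-matrix criterion of \cite[Theorem 4.51-C]{T} via the isometries $\Phi_j$ (with the column condition absorbed by nuclearity), the bound $\sum_{m=1}^n\frac{w_k(n)}{w_l(m)}\binom{n-1}{m-1}\le 2^{n-1}e^{-(\frac1k-\frac1l)\alpha_n}$ for (iii)$\Rightarrow$(ii), and the lower bound $e^{-\alpha_n/k}2^{n-1}\le C$ for (ii)$\Rightarrow$(iii). The only (immaterial) differences are your choice $l=k+1$ where the paper takes $l=2k$, and that you spell out why the column condition is automatic where the paper merely notes it is equivalent to nuclearity.
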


\begin{proof} (i)$\Leftrightarrow$(ii).
By definition $\Delta  \in \cL (\Lambda_0(\alpha))$ if and only if for each $k \in \N $ there exists $l \ge k$
such that $\Delta : c_0 (w_l) \to c_0 (w_k)$ is continuous. Since the linear map $\Phi_m : x = (x_n)_{n \in \N}
\mapsto (w_m (n)x_n)_{n \in \N }$ is an isometric isomorphism from the weighted Banach space $c_0 (w_m)$
onto $c_0$, for each $m \in \N$, it follows that $\Delta : c_0 (w_l) \to c_0 (w_k)$ is continuous if and only if
$T_{k,l} : c_0 \to c_0$ is continuous, where $T_{k,l} := \Phi_k \Delta \Phi_l^{-1}$ is given by the lower triangular
matrix $T_{k,l} = (t_{n,m})_{n,m \in \N }$ with $|t_{n,m}| = \frac{w_k (n)}{w_l (m)} \binom{n-1}{m-1} $
for $ 1 \le m \le n $ and $t_{n,m} =0$ otherwise. By Theorem 4.51-C of \cite{T},
$T_{k,l} \in \cL (c_0)$ if and only if both $\lim_{n \to \infty } |t_{n,m}| =0$ for each $m \in \N $ (which is equivalent
to the nuclearity of $\Lambda_0 (\alpha)$) and that \eqref{new} is satisfied.

(ii)$\Rightarrow$(iii).
Fix $k \in \N $ and select $l \ge k$ to satisfy \eqref{new}. Then there is $R_k > 1 $ such that
$$
w_k (n) \sum^n_{m=1}  \binom{n-1}{m-1}
\frac{1}{w_l (m)} \le R_k, \qquad n \in \N .
$$
Since $\frac{1}{w_l (m)} > 1 $ for all $m, l \in \N $ and $\sum^n_{m=1}  \binom{n-1}{m-1} = 2^{n-1}$, it follows that
$$
2^{n-1} e^{- \alpha_n/k} = 2 ^{n-1} w_k (n) \le w_k (n) \sum^n_{m=1}
\binom{n-1}{m-1} \frac{1}{w_l(m)} \le R_k, \qquad n \in \N .
$$
Via the identity $2^{n-1} e^{-\alpha_n/k} = \exp \left((n-1) \log (2) - \frac{\alpha_n}{k}\right)$ this inequality
can be solved to yield
$$
\frac{n}{\alpha_n }  \le \left(  1+ \frac{\log (R_k)}{\log (2)}  \right) \frac{1}{\alpha_n} + \frac{1}{k \log (2)} ,
\qquad n \in \N ,
$$
which implies that $\lim_{n \to \infty }  \frac{n}{\alpha_n}$.

(iii)$\Rightarrow$(ii).
Fix $k \in \N $ and set $l:= 2 k$. Since $w_l$ is decreasing and $\frac{w_k (n)}{w_{2k}(n)} = e^{- \alpha_n/2k}$
we have, for each $n \in \N $, that
\begin{eqnarray*}
&& \sum^n_{m=1} \frac{w_k (n)}{w_l (m)}
\binom{n-1}{m-1} \le \frac{w_k (n)}{w_{2k} (n)} \sum^n_{m=1}
\binom{n-1}{m-1}  \le e^{- \alpha_n/2k} 2^n \\
&& = \exp \left(\alpha_n \left (\frac{n \log (2)}{\alpha_n} - \frac{1}{2k}\right)\right) .
\end{eqnarray*}
It is then  clear from $\alpha_n \uparrow \infty $ and $\lim_{n \to \infty } \frac{n \log (2)}{\alpha_n} =0$ that the
left-side of the previous inequality converges to $0$ for $n \to \infty $. In particular, \eqref{new} is satisfied.
\end{proof}

\begin{remark} \rm
(i)
For each $\beta > 0 $ consider $\alpha_\beta (n) := n^\beta$, for $n \in \N$. Then $\lim_{n \to \infty } \frac{ \log (n)}{\alpha_\beta (n)} =0$
and so $\Lambda_0 (\alpha_\beta)$ is nuclear for every $\beta > 0 $. But, $\lim_{n \to \infty } \frac{n }{\alpha_\beta (n)} =0$ if
and only if $\beta > 1 $, i.e., $\Delta \in \cL (\Lambda_0 (\alpha_\beta))$ if and only if $\beta > 1$.

(ii)
The continuity of the operators $\Delta$ and $D$  in $\Lambda_0 (\alpha)$ is unrelated. Indeed, $D$ is continuous on
$\Lambda_0 (\alpha_\beta)$ for every $\beta \in (0,1)$ whereas $\Delta $ is not. On the other hand, by Proposition \ref{pr14}, $\Delta $ is continuous on
$\Lambda_0 (\alpha)$ for $\alpha_n := n^n$, $n \in \N$, but $D$ fails to be continuous on this space; see Remark \ref{R.diff}.

\end{remark}

Recall that $w_k(n)=e^{-\alpha_n/k}$ for $k,\ n\in\N$.
In order to formulate the following results, given a sequence $\alpha$ with $\alpha_n\uparrow\infty$ define
\begin{equation}\label{eq.condE}
S_k(\alpha):=\left\{s\in\R\colon \sum_{n=1}^\infty\frac{1}{n^sw_k(n)}=\sum_{n=1}^\infty\frac{e^{\alpha_n/k}}{n^s}<\infty\right\},\quad k\in\N.
\end{equation}
It follows from \eqref{eq.condE} and $w_k\leq w_{k+1}$ that $S_k(\alpha)\su S_{k+1}(\alpha)$ for all $k\in\N$.
If $S_{k_0}(\alpha)\not=\emptyset$ for some $k_0\in\N$, then $S_k(\alpha)\not=\emptyset$ for all $k\geq k_0$ and we define $s_0(k):=\inf S_k(\alpha)$, in which case $s_0(k)\geq s_0(k+1)$ for all $k\geq k_0$. Moreover, $s_0(k)\geq 1$ for all $k\geq k_0$; see the inequality (3.1) in \cite{ABR-9}. Observe that $\alpha_n= \beta \log n,$ for $ \beta>0,$ satisfies
$S_1(\alpha)=(1+\beta,\infty) \neq \emptyset$.

\begin{lemma}\label{L.S} For any sequence  $\alpha$ with $\alpha_n\uparrow\infty$ the following assertions hold.
\begin{itemize}
\item[\rm (i)] $S_k(\alpha)\not=\emptyset$ for some  $k\in\N$ if and only if $S_k(\alpha)\not=\emptyset$ for every $k\in\N$.
\item[\rm (ii)] If  $S_1(\alpha)\not=\emptyset$, then $\Lambda_0(\alpha)$ is not nuclear and $s_0(\alpha):=\inf_{k\in\N}s_0(k)=1$.
\end{itemize}
\end{lemma}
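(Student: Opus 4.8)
The plan is to reduce both parts of the lemma to one elementary fact: for $\alpha$ with $\alpha_n\uparrow\infty$ there exists $k_0\in\N$ with $S_{k_0}(\alpha)\neq\emptyset$ if and only if $\alpha$ grows at most logarithmically, i.e.\ there are constants $M,M'>0$ with $\alpha_n\le M\log n+M'$ for all $n\in\N$. For the ``only if'' direction, pick $s_1\in S_{k_0}(\alpha)$; since $\sum_{n=1}^\infty e^{\alpha_n/k_0}n^{-s_1}<\infty$, the general term is bounded, say $e^{\alpha_n/k_0}\le Cn^{s_1}$ with $C\ge 1$, and taking logarithms gives $\alpha_n\le k_0s_1\log n+k_0\log C$ (here $s_1>0$, since otherwise $(\alpha_n)_n$ would be bounded, contradicting $\alpha_n\uparrow\infty$). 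For the ``if'' direction, if $\alpha_n\le M\log n+M'$ then for each $k\in\N$ we have $e^{\alpha_n/k}\le e^{M'/k}n^{M/k}$, so
\[
\sum_{n=1}^\infty\frac{e^{\alpha_n/k}}{n^s}\le e^{M'/k}\sum_{n=1}^\infty n^{M/k-s}<\infty \quad\text{ if }\ s>1+\tfrac{M}{k};
\]
hence $\bigl(1+\tfrac{M}{k},\infty\bigr)\subseteq S_k(\alpha)$ and, in particular, $S_k(\alpha)\neq\emptyset$ with $s_0(k)\le 1+\tfrac{M}{k}$, for every $k\in\N$.

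For part (i) this is then immediate: ``$S_k(\alpha)\neq\emptyset$ for every $k$'' trivially implies ``for some $k$'', while conversely $S_{k_0}(\alpha)\neq\emptyset$ for some $k_0$ forces the growth bound $\alpha_n\le M\log n+M'$ by the first step, which in turn yields $S_k(\alpha)\neq\emptyset$ for every $k\in\N$ by the second step. (The already recorded inclusion $S_k(\alpha)\subseteq S_{k+1}(\alpha)$ disposes of the indices $k\ge k_0$ directly; the growth estimate is genuinely needed only in order to descend to indices $k<k_0$, where the weight $1/w_k(n)=e^{\alpha_n/k}$ is \emph{larger} and convergence of the series is harder.)

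For part (ii), assume $S_1(\alpha)\neq\emptyset$. The first step (with $k_0=1$) gives $\alpha_n\le M\log n+M'$, hence $\frac{\log n}{\alpha_n}\ge\frac{\log n}{M\log n+M'}\to\frac{1}{M}>0$ as $n\to\infty$, so $\lim_{n\to\infty}\frac{\log n}{\alpha_n}\neq 0$ and $\Lambda_0(\alpha)$ is not nuclear by Remark \ref{R.W1}(iii). For the value of $s_0(\alpha)$: the second step gives $s_0(k)\le 1+\tfrac{M}{k}$ for every $k\in\N$, so $s_0(\alpha)=\inf_{k\in\N}s_0(k)\le\lim_{k\to\infty}\bigl(1+\tfrac{M}{k}\bigr)=1$; combined with $s_0(k)\ge 1$ for all $k$ (inequality (3.1) in \cite{ABR-9}, recalled just before the lemma) this yields $s_0(\alpha)=1$. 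I expect no real obstacle beyond keeping straight that decreasing $k$ enlarges $e^{\alpha_n/k}$, which is why the ``hard'' inclusion in part (i) runs toward small indices and why logarithmic growth of $\alpha$ is exactly the quantity one should extract from a single nonempty $S_{k_0}(\alpha)$.
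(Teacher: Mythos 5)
Your proposal is correct. The key difference lies in part (i): you extract from a single nonempty $S_{k_0}(\alpha)$ the global growth bound $\alpha_n\le M\log n+M'$ (boundedness of the general term of a convergent series, then logarithms) and feed it back in to show $\bigl(1+\tfrac{M}{k},\infty\bigr)\subseteq S_k(\alpha)$ for \emph{every} $k$, which settles the descent to indices $k<k_0$ in one stroke. The paper instead descends one index at a time, using the elementary inequality $x^{\beta}\le x$ for $x\in[0,1]$ and $\beta>1$ applied to $x=e^{\alpha_n/k}/n^{t}$ with exponent $\beta=k/(k-1)$, obtaining $\tfrac{tk}{k-1}\in S_{k-1}(\alpha)$ and iterating $k-1$ times; your route avoids this induction and has the advantage of making the underlying phenomenon (nonemptiness of some $S_k(\alpha)$ is equivalent to at most logarithmic growth of $\alpha$) explicit, which then also yields $s_0(k)\le 1+\tfrac{M}{k}$ and hence $s_0(\alpha)=1$ immediately. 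For part (ii) your computation of $s_0(\alpha)$ is essentially the paper's (both rest on $e^{\alpha_n}\le Cn^{t}$), while for non-nuclearity you argue directly that $\liminf_n\tfrac{\log n}{\alpha_n}\ge\tfrac{1}{M}>0$, whereas the paper argues by contradiction via Lemma \ref{nuclcond} and the cited fact that $w_1\in s$ forces $S_1(\alpha)=\emptyset$; your version is self-contained and avoids the external reference. No gaps.
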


\begin{proof} (i) Suppose that $S_k(\alpha)\not=\emptyset$ for some  $k\in\N$. From the discussion prior to the lemma it is clear that $S_r(\alpha)\not=\emptyset$ for all $r\geq k$. If $k=1$, then $S_r(\alpha)\not=\emptyset$ for every $r\in\N$. So, assume that $k\geq 2$. According to \eqref{eq.condE} there exists $t\geq 1$ satisfying $\sum_{n=1}^\infty\frac{e^{\alpha_n/k}}{n^t}<\infty$ and hence, for some $n_0\in\N$, we have $\frac{e^{\alpha_n/k}}{n^t}\leq 1,$ for $n\geq n_0$.
Since $x^\beta\leq x$ for every $x\in [0,1]$ and $\beta>1$, it follows that
\[
\frac{e^{\alpha_n/(k-1)}}{n^{tk/(k-1)}}= \left(\frac{e^{\alpha_n/k}}{n^t}\right)^{k/(k-1)}\leq \frac{e^{\alpha_n/k}}{n^t},\quad n\geq n_0.
\]
So, $\sum_{n=1}^\infty \frac{e^{\alpha_n/(k-1)}}{n^{tk/(k-1)}}<\infty$ which shows that $\frac{tk}{(k-1)}\in S_{k-1}(\alpha)$, i.e., $S_{k-1}(\alpha)\not=\emptyset$. This argument can be repeated $ (k-1)$   times to conclude $S_r(\alpha)\not=\emptyset$ for all $1\leq r<k$. Hence, $S_r(\alpha)\not=\emptyset$ for every $r\in\N$. The converse is obvious.

(ii)  If $\Lambda_0(\alpha)$ is nuclear, then  Lemma \ref{nuclcond}  implies  that $w_1\in s$ and hence, $S_1(\alpha)=\emptyset$, \cite[Proposition 3.1(iii)]{ABR-9}. This contradicts the hypothesis that $S_1(\alpha)\not=\emptyset$ and so $\Lambda_0(\alpha)$ is not nuclear.
Since $S_1(\alpha)\not=\emptyset$, there exists $t\geq 1$ satisfying $\sum_{n=1}^\infty\frac{e^{\alpha_n}}{n^t}<\infty$. In particular, there exists $n_0\in\N$ such that $e^{\alpha_n}\leq n^t$ for all $n\geq n_0$. Fix any $\ve>0$ and select $k_0\in\N$ satisfying $\frac{t}{k_0}<\frac{\ve}{2}$. Then it follows, for every $k\geq k_0$ and $n\geq n_0$, that $e^{\frac{\alpha_n}{k}}<n^{\frac{t}{k}}$. This yields, for every $k\geq k_0$, that
\[
\sum_{n=n_0}^\infty\frac{1}{n^{\frac{t}{k}+1+\frac{\ve}{2}}w_k(n)}=\sum_{n=n_0}^\infty\frac{e^\frac{\alpha_n}{k}}{n^{\frac{t}{k}+1+\frac{\ve}{2}}}\leq \sum_{n=n_0}^\infty\frac{1}{n^{1+\frac{\ve}{2}}}<\infty.
\]
Accordingly, $\frac{t}{k}+1+\frac{\ve}{2}\in S_k(\alpha)$ for all $k\geq k_0$ and hence, $1\leq s_0(k)\leq \frac{t}{k}+1+\frac{\ve}{2}$ for all $k\geq k_0$. But, $\frac{t}{k}+1+\frac{\ve}{2}<1+\ve$ and so  $1\leq s_0(k)<1+\ve$ for all $k\geq k_0$. Since $\ve>0$ is arbitrary, this implies that $s_0(\alpha)=\lim_{k\to\infty}s_0(k)=1$.
\end{proof}

\begin{remark}\label{example}\rm There exists $\alpha$ with $\alpha_n\uparrow\infty$ for which $S_1(\alpha)=\emptyset$ but, $\Lambda_0(\alpha)$ is not nuclear. Indeed,
let $(j(k))_{k\in\N} \subseteq \N$ be the sequence given by $j(1):=1\quad {\rm and}\quad j(k+1):=2(k+1)(j(k))^k,$ for $k\geq 1$. Observe that $j(k+1)>k(j(k))^k+1$ for all $k\in\N$. Define $\beta=(\beta_n)_{n\in\N}$ via $\beta_n:=k(j(k))^k$ for $n=j(k),\ldots, j(k+1)-1$. Then $\beta$ is non-decreasing with $\beta_n\to\infty$. The claim is that
\begin{equation}\label{eq.serie}
\sum_{n=1}^\infty\frac{\beta_n}{n^t}=\infty,\quad t\in\R.
\end{equation}
To see this, fix  $t\in\R$ and choose $k\in\N$ satisfying $k>t$. Then $\frac{\beta_{j(k)}}{(j(k))^t}=\frac{k(j(k))^k}{(j(k))^t}=k(j(k))^{k-t}\geq k$ for all $k\in\N$ and so the subsequence $(\frac{\beta_{j(k)}}{(j(k))^t})_{k\in\N}$ of $(\frac{\beta_n}{n^t})_{n\in\N}$ satisfies $\sup_{k\in\N}\frac{\beta_{j(k)}}{(j(k))^t}=\infty$. In particular, $\sum_{n=1}^\infty\frac{\beta_n}{n^t}=\infty$. Moreover,
\begin{equation}\label{eq.lim}
\lim_{n\to\infty}\frac{\log n}{\log \beta_n}\not=0.
\end{equation}
Indeed, this follows immediately from the fact that, for every $k>1$, we have
\[
\frac{\log (j(k+1))-1)}{\log \beta_{j(k+1)-1}}=\frac{\log (j(k+1))-1)}{\log (k(j(k))^k)}>\frac{\log (k(j(k))^k)}{\log (k(j(k))^k)}=1.
\]
Next, let $\gamma=(\gamma_n)_{n\in\N}$ be any strictly increasing sequence satisfying $2<\gamma_n\uparrow 3$. Define  $\alpha_n:=\log (\beta_n+\gamma_n)$, for $n\in\N$. Then  $1<\alpha_n\uparrow\infty$. The claim is that  $S_1(\alpha)=\emptyset$ and $\lim_{n\to\infty}\frac{\log n}{\alpha_n}\not=0$. To see this fix $t\in\R$. Then \eqref{eq.serie} implies that
\[
\sum_{n=1}^\infty\frac{e^{\alpha_n}}{n^t}=\sum_{n=1}^\infty\frac{\beta_n+\gamma_n}{n^t}\geq \sum_{n=1}^\infty\frac{\beta_n}{n^t}=\infty,
\]
that is, $t\not\in S_1(\alpha)$. Accordingly, $S_1(\alpha)=\emptyset$.
On the other hand, for all $n>1$ we have $(\beta_n+\gamma_n)<(\beta_n+3)$ and so
\[
\frac{\log n}{\log (\beta_n)}>\frac{\log n}{\alpha_n}=\frac{\log n}{\log (\beta_n+\gamma_n)}>\frac{\log n}{\log (\beta_n+3)}.
\]
But,  $\frac{\log n}{\log (\beta_n+3)}\simeq \frac{\log n}{\log\beta_n}$ for $n\to\infty$ and hence, by \eqref{eq.lim}, it follows that  $\lim_{n\to\infty}\frac{\log n}{\alpha_n}\not=0$. In particular,  $\Lambda_0(\alpha)$ cannot be nuclear; see Proposition \ref{P-K1}.
\end{remark}

For each $r\geq 1$, define the open disc $D(r):=\left\{\lambda\in\C\colon \left|\lambda-\frac{1}{2r}\right|<\frac{1}{2r}\right\}$.

\begin{prop}\label{P-spectrum-no} Let $\alpha$ satisfy $\alpha_n\uparrow\infty$ and $S_1(\alpha)\not=\emptyset$. Then
\begin{equation}\label{eq.condF}
D(1)\cup\{1\}=\cup_{k\in\N}D(s_0(k))\cup\Sigma\su \sigma(\sC; \Lambda_0(\alpha))
\end{equation}
and also
\begin{equation}\label{eq.condG}
 \sigma(\sC; \Lambda_0(\alpha))\su\cup_{k\in\N}\sigma(\sC; c_0(w_k))\su \ov{D(1)}.
\end{equation}
In particular,
\begin{equation}\label{eq.condH}
\sigma^*(\sC; \Lambda_0(\alpha))=\ov{\sigma(\sC; \Lambda_0(\alpha))}=\ov{D(1)}.
\end{equation}
\end{prop}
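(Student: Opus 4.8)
The plan is to establish the three displayed inclusions/identities in \eqref{eq.condF}, \eqref{eq.condG}, \eqref{eq.condH} by reducing everything to the known spectral behaviour of $\sC$ on the weighted Banach spaces $c_0(w_k)$, together with the general locally convex machinery relating $\sigma(\sC;\Lambda_0(\alpha))$ and $\sigma^*(\sC;\Lambda_0(\alpha))$ to the union of the Banach space spectra. First I would record, using Lemma \ref{L.S}(i), that $S_k(\alpha)\neq\emptyset$ for all $k\in\N$, so that $s_0(k)=\inf S_k(\alpha)\geq 1$ is well defined for every $k$, and by Lemma \ref{L.S}(ii) that $s_0(\alpha)=\lim_{k\to\infty}s_0(k)=1$. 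Since $s_0(k)$ decreases to $1$ and the discs $D(r)$ are nested increasingly as $r\downarrow 1$ with $\cup_{r>1}D(r)=D(1)$, it follows that $\cup_{k\in\N}D(s_0(k))=D(1)$; adjoining $\Sigma$ (and noting $1\in\Sigma$) gives the first equality in \eqref{eq.condF}.

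For the lower inclusion $\cup_k D(s_0(k))\cup\Sigma\su\sigma(\sC;\Lambda_0(\alpha))$, I would invoke the results of \cite{ABR-9} on $\sC$ acting in $c_0(w_k)$: when $S_k(\alpha)\neq\emptyset$ with $s_0(k)=\inf S_k(\alpha)$, one has $D(s_0(k))\su\sigma(\sC;c_0(w_k))$ (this is precisely the ``non-nuclear'' regime analysed there, cf.\ Corollary 3.6 and the surrounding discussion). Because $\Lambda_0(\alpha)={\rm proj}_k c_0(w_k)$ with continuous dense inclusions $\Lambda_0(\alpha)\hookrightarrow c_0(w_k)$, any $\lambda$ for which $\lambda I-\sC$ fails to be invertible on some $c_0(w_k)$ cannot have $\lambda I-\sC$ invertible on $\Lambda_0(\alpha)$ — more directly, $\sigma(\sC;c_0(w_k))\su\sigma(\sC;\Lambda_0(\alpha))$ for each $k$ by the standard projective-limit argument (an inverse continuous on the Fréchet space would restrict/extend to a continuous inverse on each step). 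Hence $D(1)=\cup_k D(s_0(k))\su\sigma(\sC;\Lambda_0(\alpha))$, and $\Sigma\su\sigma(\sC;\Lambda_0(\alpha))$ is already Lemma \ref{L-1}. This proves \eqref{eq.condF}.

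For the upper bounds \eqref{eq.condG}: the first inclusion $\sigma(\sC;\Lambda_0(\alpha))\su\cup_k\sigma(\sC;c_0(w_k))$ is the general principle that if $\lambda I-\sC$ is invertible on every $c_0(w_k)$ then, by gluing the compatible inverses (they agree on the dense subspace $\Lambda_0(\alpha)$, hence everywhere they are jointly defined) one obtains a continuous inverse on the projective limit $\Lambda_0(\alpha)$; this is exactly \cite[Lemma 2.1]{ABR-1} (the same tool used at the end of the proof of Proposition \ref{p.sprectrum}). The second inclusion $\cup_k\sigma(\sC;c_0(w_k))\su\ov{D(1)}$ follows because, for each $k$, the weight $w_k$ is decreasing so $\sigma(\sC;c_0(w_k))\su\ov{D(s_0(k))}\su\ov{D(1)}$ (again from the $c_0(w)$ spectral description in \cite{ABR-9}, using $s_0(k)\geq 1$ and monotonicity of the closed discs in the radius parameter). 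Combining with \eqref{eq.condF} gives $\ov{D(1)}\supseteq\sigma(\sC;\Lambda_0(\alpha))\supseteq D(1)\cup\{1\}$, and since $D(1)\cup\{1\}$ is already dense in $\ov{D(1)}$ (indeed $\ov{D(1)\cup\{1\}}=\ov{D(1)}$ as $1\in\partial D(1)$), we get $\ov{\sigma(\sC;\Lambda_0(\alpha))}=\ov{D(1)}$. Finally, for \eqref{eq.condH}, apply \cite[Lemma 2.1]{ABR-1} once more: it gives $\sigma^*(\sC;\Lambda_0(\alpha))=\ov{\cup_k\sigma(\sC;c_0(w_k))}$, and by the sandwich just obtained this common closure equals $\ov{D(1)}$.

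The main obstacle I anticipate is pinning down the precise statement from \cite{ABR-9} identifying $\sigma(\sC;c_0(w))$ for a decreasing weight $w$ with $r^{\alpha_n}$-type growth when $S_1\neq\emptyset$: one needs both the lower bound $D(s_0(k))\su\sigma(\sC;c_0(w_k))$ and the upper bound $\sigma(\sC;c_0(w_k))\su\ov{D(s_0(k))}$, and the bookkeeping that $s_0(k)\downarrow 1$ converts these per-step statements into the clean discs $D(1)$ and $\ov{D(1)}$. Everything else — the projective-limit spectral inclusions, density of $\varphi$, and the passage to $\sigma^*$ via \cite[Lemma 2.1]{ABR-1} — is routine and parallels the argument already given for Proposition \ref{p.sprectrum}.
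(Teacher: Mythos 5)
Your treatment of the upper bound \eqref{eq.condG} and of the passage to $\sigma^*$ via \cite[Lemma 2.1]{ABR-1} matches the paper, as does the reduction $\cup_{k}D(s_0(k))=D(1)$ using $s_0(k)\downarrow 1$ from Lemma \ref{L.S}(ii). The gap is in your argument for the lower bound in \eqref{eq.condF}. You claim that $\sigma(\sC;c_0(w_k))\su\sigma(\sC;\Lambda_0(\alpha))$ ``by the standard projective-limit argument (an inverse continuous on the Fr\'echet space would restrict/extend to a continuous inverse on each step).'' This inclusion is false in general, and the justification fails: continuity of $R(\lambda,\sC)$ on $\Lambda_0(\alpha)={\rm proj}_k c_0(w_k)$ only means that for each $k$ there is some $l>k$ with $p_k(R(\lambda,\sC)x)\leq C p_l(x)$, i.e.\ it gives a continuous map $c_0(w_l)\to c_0(w_k)$, not a bounded inverse on any single step $c_0(w_k)$. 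The paper itself contains a counterexample to your claimed inclusion: when $\Lambda_0(\alpha)$ is nuclear with $v(\alpha)>0$, Proposition \ref{p.sprectrum} gives $\sigma(\sC;c_0(w_k))=\Sigma_0$ for every $k$ while $\sigma(\sC;\Lambda_0(\alpha))=\Sigma$, so $0$ lies in every Banach-step spectrum but not in the Fr\'echet spectrum. Only the inclusion $\sigma(\sC;\Lambda_0(\alpha))\su\cup_k\sigma(\sC;c_0(w_k))$ transfers automatically; the reverse direction needs a different mechanism.

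The mechanism the paper uses is duality, and it is the missing idea in your write-up. For $\lambda\in D(s_0(k))\setminus\Sigma$ one constructs explicitly the sequence $y_{n+1}=y_1\prod_{m=1}^{n}\left(1-\frac{1}{\lambda m}\right)$, which (by Step 1 of the proof of \cite[Proposition 3.7]{ABR-9}, using $\RR(1/\lambda)>s_0(k)$) lies in $\ell_1(w_k^{-1})=c_0(w_k)'$ and is an eigenvector of the dual operator $\sC_k'$ with eigenvalue $\lambda$. Since $c_0(w_k)'\su\Lambda_0(\alpha)'$ (duals of the steps inject into the dual of the projective limit, unlike invertibility, which does not descend), the functional $y$ annihilates $(\sC-\lambda I)(\Lambda_0(\alpha))$, so $\sC-\lambda I$ is not surjective and $\lambda\in\sigma(\sC;\Lambda_0(\alpha))$. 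In short: what transfers from $c_0(w_k)$ to $\Lambda_0(\alpha)$ is the point spectrum of the \emph{dual} operator, not the spectrum itself, and your proof needs to be rerouted through that observation.
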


\begin{proof} Lemma \ref{L.S}(ii) yields $s_0(\alpha)=1$. By Lemma \ref{L-1},  $\Sigma\su \sigma(\sC; \Lambda_0(\alpha))$.
Fix $k\in\N$ and let $\lambda\in\C\setminus\Sigma$ satisfy $\left|\lambda-\frac{1}{2s_0(k)}\right|<\frac{1}{2s_0(k)}$, i.e., $\lambda\in D(s_0(k))\setminus \Sigma$. For any $y_1\in\C\setminus\{0\}$ define $y\in\C^\N\setminus\{0\}$ by $y_{n+1}:=y_1\prod_{m=1}^n\left(1-\frac{1}{\lambda m}\right)$ for $n\in\N$. It is shown in the proof of Step 1 in the proof of \cite[Proposition 3.7]{ABR-9} that $y\in \ell_1(w_k^{-1})=c_0(w_k)'$ satisfies $\sC_k'y=\lambda y$, where $\sC_k'$ is the dual operator of the Ces\`aro operator $\sC_k\colon c_0(w_k)\to c_0(w_k)$. For $z\in \Lambda_0(\alpha)\su c_0(w_k)$ the vector $(\sC-\lambda I)z$, with $\sC\in \cL(\Lambda_0(\alpha))$, belongs to $c_0(w_k)$. Since $y\in c_0(w_k)'\su \Lambda_0(\alpha)'$, it follows that $\langle (\sC-\lambda I)z, y\rangle=\langle (\sC_k-\lambda I)z, y\rangle=\langle z, (\sC'_k-\lambda I)y\rangle=0$. Therefore $\langle u,y\rangle=0$ for every $u\in \ov{{\rm Im}(\sC-\lambda I)}\su \Lambda_0(\alpha)$ with $y\in\Lambda_0(\alpha)'\setminus\{0\}$. Then $(\sC-\lambda I)\in \cL(\Lambda_0(\alpha))$ cannot be surjective and hence, $\lambda\in \sigma(\sC; \Lambda_0(\alpha))$. This shows that $D(s_0(k))\setminus \Sigma\su \sigma(\sC;\Lambda_0(\alpha))$ and hence, $\Sigma\cup D(s_0(k))\su \sigma(\sC;\Lambda_0(\alpha))$. Since $k\in\N$ is arbitrary and $\frac{1}{2s_0(\alpha)}$ (with $s_0(\alpha)=1$) is the limit of the increasing sequence $\{1/2s_0(k)\}_{k\in\N}$, this establishes \eqref{eq.condF}.

The first containment in \eqref{eq.condG}  follows from \cite[Lemma 2.1]{ABR-1}.  Moreover, since each weight $w_k$, $k\in\N$, is strictly positive and decreasing, Corollary 3.6 of \cite{ABR-9} yields that $\sigma(\sC; c_0(w_k))\su \left\{\lambda\in \C\colon \left|\lambda-\frac{1}{2}\right|\leq \frac{1}{2}\right\}=\ov{D(1)}$ for each $k\in\N$, from which the second containment in \eqref{eq.condG} follows immediately.

The equality $\ov{\sigma(\sC;\Lambda_0(\alpha))}=\ov{D(1)}$ in \eqref{eq.condH} is a consequence of \eqref{eq.condF} and \eqref{eq.condG}.
 So, according to \eqref{eq.condG}, we have $\cup_{k\in\N}\sigma(\sC; c_0(w_k))\su \ov{\sigma(\sC;\Lambda_0(\alpha))}$.
Then \cite[Lemma 2.1]{ABR-1}  implies that $\sigma^*(\sC; \Lambda_0(\alpha))=\ov{\sigma(\sC;\Lambda_0(\alpha))}=\ov{D(1)}$.
\end{proof}

\begin{remark}\label{R.S-C}\rm An examination of the proof of the containments \eqref{eq.condG} shows that the hypothesis $S_1(\alpha)\not=\emptyset$ was not used. Accordingly, for \textit{every} sequence $\alpha$ with $\alpha_n\uparrow\infty$ it is always the case that
\[
\sigma(\sC;\Lambda_0(\alpha))\su \left\{\lambda\in\C\colon \left|\lambda-\frac{1}{2}\right|\leq \frac{1}{2}\right\}.
\]
\end{remark}

The behaviour of the Ces\`aro operator in the Fr\'echet space $H(\D)$ is known; see for example, \cite[pp.65-68]{Dah} and also \cite{AP}. Since $H(\D)$ is isomorphic to $\Lambda_0(\alpha)$ for $\alpha=(n)_{n \in \N}$, the following result also follows from Proposition \ref{p.sprectrum}.

\begin{prop}\label{p.H}
The Ces\`aro operator  $\sC\colon H(\D)\to H(\D)$ satisfies $\sigma(\sC;H(\D))=\sigma_{pt}(\sC;H(\D))=\Sigma$ and $\sigma^*(\sC;H(\D))=\Sigma_0$.
\end{prop}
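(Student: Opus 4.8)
The plan is to deduce everything from Proposition~\ref{p.sprectrum}, applied to the power series space $\Lambda_0(\alpha)$ with $\alpha_n:=n$, via the classical identification of $H(\D)$ with this space through Taylor coefficients. First I would recall (as already invoked in the text preceding the statement) that the map $J$ sending $f\in H(\D)$ with Taylor expansion $f(z)=\sum_{n\ge1}c_{n-1}z^{n-1}$ to its coefficient sequence $Jf:=(c_{n-1})_{n\in\N}$ is a linear topological isomorphism of $H(\D)$ onto $\Lambda_0(\alpha)$: the compact-open topology of $H(\D)$ is generated by the norms $f\mapsto\sup_{|z|\le r}|f(z)|$, $0<r<1$, and by the Cauchy estimates together with the bound $\sup_{|z|\le r}|f(z)|\le(1-r/\rho)^{-1}\sup_n\rho^{n-1}|c_{n-1}|$ for $r<\rho<1$, these are equivalent to the weighted sup-norms $f\mapsto\sup_n r^{n-1}|c_{n-1}|$, which (taking $r=r_k\uparrow1$ and using Remark~\ref{R.W1}(i)) are, up to an index shift, the norms $p_k$ defining $\Lambda_0(\alpha)$.

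Next I would check that $J$ conjugates the Ces\`aro operator on $H(\D)$, in the sense of \cite{Dah, AP}, into the sequence operator $\sC$ of this paper. This is a short power series computation: for $f(z)=\sum_{j\ge0}c_jz^j$ one has $f(z)/(1-z)=\sum_{j\ge0}\big(\sum_{i=0}^j c_i\big)z^j$, hence $\frac1z\int_0^z\frac{f(\zeta)}{1-\zeta}\,d\zeta=\sum_{j\ge0}\frac{c_0+\cdots+c_j}{j+1}z^j$, and re-indexing by $n=j+1$ this coefficient sequence is precisely $\sC(Jf)$. Thus $\sC$ on $H(\D)$ is similar, via the topological isomorphism $J$, to $\sC\in\cL(\Lambda_0(\alpha))$.

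It then remains to invoke Proposition~\ref{p.sprectrum}. Since $\alpha_n=n$ satisfies $\lim_{n\to\infty}\frac{\log n}{\alpha_n}=0$, the space $\Lambda_0(\alpha)$ is nuclear (Remark~\ref{R.W1}(iii)), and $v(\alpha)=\inf_n(\alpha_{n+1}-\alpha_n)=1>0$; hence part~(i) gives $\sigma(\sC;\Lambda_0(\alpha))=\sigma_{pt}(\sC;\Lambda_0(\alpha))=\Sigma$ and part~(ii) gives $\sigma^*(\sC;\Lambda_0(\alpha))=\ov{\sigma(\sC;\Lambda_0(\alpha))}=\Sigma_0$. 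Since $\sigma$, $\sigma_{pt}$ and $\sigma^*$ are all invariant under conjugation by a topological isomorphism of Fr\'echet spaces (for $\sigma^*$ one uses that such an isomorphism carries equicontinuous families of operators to equicontinuous families, so that $\rho^*$ is preserved), transporting these equalities back along $J$ yields $\sigma(\sC;H(\D))=\sigma_{pt}(\sC;H(\D))=\Sigma$ and $\sigma^*(\sC;H(\D))=\Sigma_0$. The only step genuinely requiring care is the intertwining identity — matching the analytic definition of the Ces\`aro operator on $H(\D)$ with the sequential one $\sC$; the rest is the similarity-invariance of spectral data plus the trivial check that $\alpha_n=n$ satisfies the hypotheses of Proposition~\ref{p.sprectrum}.
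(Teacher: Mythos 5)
Your proposal is correct and follows exactly the route the paper intends: the paper's own (one-line) justification is precisely that $H(\D)\cong\Lambda_0(\alpha)$ for $\alpha_n=n$, which is nuclear with $v(\alpha)=1>0$, so Proposition \ref{p.sprectrum} applies. You merely make explicit the coefficient isomorphism and the intertwining computation, both of which check out.
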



\section{Iterates of $\sC$ and  mean ergodicity. }\label{iterates}

An operator  $T \in \cL(X)$,
with $X$ a Fr\'echet space, is \textit{power bounded}  if $\{T^n\}_{n=1}^\infty$ is an equicontinuous subset of $\cL(X)$.   Given $T\in \cL(X)$, the averages
$$
    T_{[n]} := \frac 1 n \sum^n_{m=1} T^m, \qquad      n\in \N,
$$
are called the  Ces\`aro means of $T$.
The operator  $T$ is said to be  \textit{mean ergodic} (resp., \textit{uniformly mean ergodic}) if  $\{T_{[n]}\}^\infty_{n=1}$
is a convergent sequence in $\cL_s (X)$ (resp., in $\cL_b (X)$). A relevant text for mean ergodic operators is \cite{K}.

\begin{prop}\label{p.unif} Let $\alpha$ be any sequence with $\alpha_n\uparrow\infty$. The Ces\`aro operator $\sC\in \cL(\Lambda_0(\alpha))$ is power bounded and uniformly mean ergodic. In particular,
\[
\Lambda_0(\alpha)=\Ker(I-\sC)\oplus\ov{(I-\sC)(\Lambda_0(\alpha))}.
\]
Moreover, it is also the case that   $\Ker (I-\sC)={\rm span}\{\mathbf{1}\}$ and
\[
\ov{(I-\sC)(\Lambda_0(\alpha))}=\{x\in \Lambda_0(\alpha)\colon x_1=0\}=\ov{{\rm span}\{e_n\}_{n\geq 2}}.
\]
\end{prop}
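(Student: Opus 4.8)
The plan is to derive the whole statement from the contractivity estimate \eqref{e.pb} of Proposition \ref{cont} together with the fact, recorded in Remark \ref{R.W1}(ii), that $\Lambda_0(\alpha)$ is Fr\'echet Montel. Iterating \eqref{e.pb} gives $p_k(\sC^nx)\le p_k(x)$ for all $n,k\in\N$ and $x\in\Lambda_0(\alpha)$, so $\{\sC^n\}_{n\in\N}$ is equicontinuous, that is, $\sC$ is power bounded; the same estimate applied to the averages yields $p_k(\sC_{[n]}x)\le p_k(x)$, so $\{\sC_{[n]}\}_{n\in\N}$ is equicontinuous as well.

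Second, I would show that $\sC$ is mean ergodic by the classical orbit-compactness argument (see \cite{K}). Fix $x\in\Lambda_0(\alpha)$. By \eqref{e.pb} the orbit $\{\sC^nx:n\in\N\}$ is bounded, hence relatively compact since $\Lambda_0(\alpha)$ is Montel, so its closed convex hull $K_x$ is compact; as $\sC_{[n]}x\in K_x$ for every $n$, the sequence $(\sC_{[n]}x)_n$ is relatively compact. From $(I-\sC)\sC_{[n]}=\frac1n(\sC-\sC^{n+1})$ and the boundedness of the orbit one gets $(I-\sC)\sC_{[n]}x\to0$, while $x-\sC_{[n]}x\in(I-\sC)(\Lambda_0(\alpha))$ for every $n$; the usual uniqueness argument (any two subsequential limits differ by an element of $\Ker(I-\sC)\cap\ov{(I-\sC)(\Lambda_0(\alpha))}=\{0\}$) then forces $(\sC_{[n]}x)_n$ to converge, to $Px$ say. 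Thus $P:=\lim_n\sC_{[n]}$ exists in $\cL_s(\Lambda_0(\alpha))$, it is a continuous projection with ${\rm Im}\,P=\Ker(I-\sC)$ and $\Ker P=\ov{(I-\sC)(\Lambda_0(\alpha))}$, and $\Lambda_0(\alpha)=\Ker(I-\sC)\oplus\ov{(I-\sC)(\Lambda_0(\alpha))}$. Since $\{\sC_{[n]}\}_n$ is equicontinuous and bounded subsets of the Montel space $\Lambda_0(\alpha)$ are precompact, the strong operator topology $\tau_s$ and the topology $\tau_b$ of uniform convergence on bounded sets agree on this equicontinuous family; hence $\sC_{[n]}\to P$ also in $\cL_b(\Lambda_0(\alpha))$, i.e.\ $\sC$ is uniformly mean ergodic.

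Third, I would identify the two summands. Solving $\sC x=x$ in $\C^\N$: writing $s_n:=x_1+\cdots+x_n=nx_n$ and subtracting consecutive identities gives $(n-1)x_n=(n-1)x_{n-1}$, so $x_n=x_{n-1}$ for all $n\ge2$; since $\mathbf{1}\in\Lambda_0(\alpha)$ always, $\Ker(I-\sC)={\rm span}\{\mathbf{1}\}$. For the range, every vector $(I-\sC)x$ has vanishing first coordinate, so $\ov{(I-\sC)(\Lambda_0(\alpha))}\su H:=\{x\in\Lambda_0(\alpha):x_1=0\}$; but $H$ is a proper closed subspace (it omits $\mathbf{1}$), while by the displayed decomposition $\ov{(I-\sC)(\Lambda_0(\alpha))}=\Ker P$ is a closed complement of the one-dimensional space ${\rm span}\{\mathbf{1}\}$ and so has codimension one in $\Lambda_0(\alpha)$; a codimension-one subspace contained in a proper subspace must equal it, whence $\ov{(I-\sC)(\Lambda_0(\alpha))}=H$. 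Finally, since $\{e_n\}_{n\in\N}$ is a Schauder basis of the K\"othe echelon space $\Lambda_0(\alpha)$, each $x\in H$ is the limit of the partial sums $\sum_{n=2}^Nx_ne_n$, so $H=\ov{{\rm span}\{e_n\}_{n\ge2}}$.

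The only delicate point is upgrading ordinary mean ergodicity to uniform mean ergodicity, and this is precisely where the Montel property of $\Lambda_0(\alpha)$ is used, via the coincidence of $\tau_s$ and $\tau_b$ on the equicontinuous family $\{\sC_{[n]}\}_n$; the remaining steps are either immediate from \eqref{e.pb} or elementary computations in $\C^\N$.
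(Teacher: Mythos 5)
Your proof is correct and follows essentially the same route as the paper: power boundedness from the contractive estimate \eqref{e.pb}, the Montel property of $\Lambda_0(\alpha)$ to upgrade to uniform mean ergodicity, and elementary computations for $\Ker(I-\sC)$ and $\ov{(I-\sC)(\Lambda_0(\alpha))}$. The only difference is that the paper outsources these steps to citations (Proposition 2.8 of \cite{ABR-0} for uniform mean ergodicity in Montel spaces, and Proposition 4.1 of \cite{ABR-7} for the kernel and range identifications), whereas you supply the standard arguments in full.
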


\begin{proof} Clearly   \eqref{e.pb} implies   that  $\sC$ is power bounded, from which  $\tau_b$-$\lim_{n\to\infty}\frac{\sC^n}{n}=0$ follows. Since $\Lambda_0(\alpha)$ is a Montel space, Proposition 2.8 of  \cite{ABR-0} implies that $\sC$ is uniformly mean ergodic.  The proof of the facts that $\Ker (I-\sC)={\rm span}\{\mathbf{1}\}$ and $\ov{(I-\sC)(\Lambda_0(\alpha))}=\{x\in \Lambda_0(\alpha)\colon x_1=0\}=\ov{{\rm span}\{e_n\}_{n\geq 2}}$ follow by applying the same arguments used in the proof of \cite[Proposition 4.1]{ABR-7}.
\end{proof}

For each $m\in\N$, recall the identities
\begin{equation}\label{eq.iter}
(\sC^mx)(n)=\sum_{k=1}^n\left(\begin{array}{c} n-1\\ k-1\end{array}\right)x_k\int_0^1t^{k-1}(1-t)^{n-k}f_m(t)\,dt,\quad n\in\N,
\end{equation}
for all $x\in \C^\N$ , where
\[
f_m(t)=\frac{1}{(m-1)!}\log^{m-1}\left(\frac{1}{t}\right),\quad  t\in (0,1];
\]
see \cite[p.2149]{GF}, \cite[p.125]{L}. The following result is inspired by \cite[Theorem 1]{GF}.

\begin{prop}\label{p.iter} Let $\alpha$ be any sequence with $\alpha_n\uparrow\infty$. The sequence of iterates $\{\sC^m\}_{m\in\N}$ is convergent in $\cL_b(\Lambda_0(\alpha))$.
\end{prop}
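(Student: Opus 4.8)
The plan is to prove the sharper statement that $\sC^m\to P$ in $\cL_b(\Lambda_0(\alpha))$, where $P$ is the mean ergodic projection of $\sC$ from Proposition~\ref{p.unif}, namely $Px:=x_1\mathbf 1$ (the projection onto $\Ker(I-\sC)=\mathrm{span}\{\mathbf 1\}$ along $\{x:x_1=0\}=\ov{(I-\sC)(\Lambda_0(\alpha))}$). Since $w_k$ is decreasing, $p_k(Px)=|x_1|\,w_k(1)\le p_k(x)$ for every $k$, so $P\in\cL(\Lambda_0(\alpha))$, and together with \eqref{e.pb} the family $\{\sC^m-P:m\in\N\}$ is equicontinuous, indeed $p_k((\sC^m-P)x)\le 2\,p_k(x)$ for all $k,m$.

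\emph{Step 1: convergence in $\cL_s(\Lambda_0(\alpha))$.} Since $\{\sC^m-P\}$ is equicontinuous and $\varphi=\mathrm{span}\{e_j:j\in\N\}$ is dense in $\Lambda_0(\alpha)$ (the truncations of $x$ converge to $x$, as $p_k(x-\sum_{n\le N}x_ne_n)=\sup_{n>N}w_k(n)|x_n|\to0$), it suffices to show $\sC^me_j\to Pe_j$ in $\Lambda_0(\alpha)$ for each $j$. From \eqref{eq.iter} one computes $a^{(m)}_n:=|(\sC^me_j)(n)-(Pe_j)(n)|=\int_0^1\phi_{n,j}(t)f_m(t)\,dt$, where for $j\ge2$ one has $\phi_{n,j}(t)=\binom{n-1}{j-1}t^{j-1}(1-t)^{n-j}$ (the value at $j-1$ of the $\mathrm{Bin}(n-1,t)$ probability mass function, hence $0\le\phi_{n,j}\le1$ on $[0,1]$ with $\phi_{n,j}(0)=0$ and $\phi_{n,j}\equiv0$ for $n<j$), and for $j=1$, since the $n$-th coordinate of $Pe_1-\sC^me_1$ equals $\int_0^1(1-(1-t)^{n-1})f_m(t)\,dt$, one takes $\phi_{n,1}(t)=1-(1-t)^{n-1}\in[0,1]$, again with $\phi_{n,1}(0)=0$. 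To estimate $p_k(\sC^me_j-Pe_j)=\sup_nw_k(n)a^{(m)}_n$, fix $k$ and $\ve>0$: since $\alpha_n\uparrow\infty$ there is $N$ with $w_k(n)<\ve$ for $n>N$, and for such $n$ the trivial bound $a^{(m)}_n\le\int_0^1f_m=1$ already gives $w_k(n)a^{(m)}_n<\ve$; for the finitely many $n\le N$, split $\int_0^1=\int_0^\delta+\int_\delta^1$, using continuity of $\phi_{n,j}$ together with $\phi_{n,j}(0)=0$ to pick $\delta>0$ with $\int_0^\delta\phi_{n,j}f_m\le\ve$ (for all these $n$ and all $m$), while $\int_\delta^1\phi_{n,j}f_m\le\int_\delta^1f_m\le(1-\delta)\,\max\{1,\log^{m-1}(1/\delta)\}/(m-1)!\to0$ disposes of the rest for $m$ large. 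Hence $p_k(\sC^me_j-Pe_j)\to0$ for every $k$, i.e.\ $\sC^m\to P$ in $\cL_s(\Lambda_0(\alpha))$.

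\emph{Step 2: upgrading to $\cL_b(\Lambda_0(\alpha))$.} Here one invokes that $\Lambda_0(\alpha)$ is Fr\'echet--Montel (Remark~\ref{R.W1}(ii)). Let $B\su\Lambda_0(\alpha)$ be bounded, so $\ov{B}$ is compact; fix $k$ and $\ve>0$. By continuity of $p_k$ and density of $\varphi$ choose $y_1,\dots,y_r\in\varphi$ with $\ov{B}\su\bigcup_i\{x:p_k(x-y_i)<\ve\}$, and by Step~1 choose $M$ with $p_k((\sC^m-P)y_i)<\ve$ for all $i$ and all $m\ge M$. Then for $x\in B$ and $m\ge M$, taking $i$ with $p_k(x-y_i)<\ve$, equicontinuity gives $p_k((\sC^m-P)x)\le p_k((\sC^m-P)(x-y_i))+p_k((\sC^m-P)y_i)\le 2\ve+\ve$, so $\sup_{x\in B}p_k((\sC^m-P)x)\to0$. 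Hence $\sC^m\to P$ in $\cL_b(\Lambda_0(\alpha))$; in particular $\{\sC^m\}_{m\in\N}$ converges in $\cL_b(\Lambda_0(\alpha))$.

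The main obstacle is Step~1, more precisely the uniform-in-$n$ control required for $p_k$-convergence of $\sC^me_j$: the natural pointwise limit $(\sC^me_j)(n)\to(Pe_j)(n)$ is easy, but to turn it into norm convergence one must handle the band of ``intermediate'' indices $n$ for which $w_k(n)$ is not yet negligible while the binomial kernel $\phi_{n,j}$ need not be small throughout $[0,\delta]$ — this is precisely why $\delta$ has to be chosen \emph{after} $\ve$ (hence after $N$), so that it may depend on the finitely many surviving values of $n$, and why the decay $\int_\delta^1 f_m\to0$ (uniform in $n$) is what carries the argument. Once Step~1 is in place, Step~2 is the routine equicontinuity-plus-compactness passage, and it is here that the Montel property of $\Lambda_0(\alpha)$ is essential, since $\cL_b$-convergence of the iterates is far more delicate than $\cL_s$-convergence.
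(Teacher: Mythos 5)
Your proof is correct, and its overall architecture --- strong convergence on a dense set of basis vectors combined with the equicontinuity coming from \eqref{e.pb}, then the Montel property of $\Lambda_0(\alpha)$ to upgrade $\tau_s$-convergence to $\tau_b$-convergence --- coincides with the paper's. Where you genuinely diverge is in the key coordinate estimate. The paper first splits $x=y+z$ along the mean ergodic decomposition of Proposition \ref{p.unif} (so that $\sC^m y=y$) and, for $r\ge 2$, invokes the bound $|(\sC^m e_r)(n)|\le a_m/(r-1)$ with $a_m:=\sup_{t\in[0,1]}t f_m(t)\to 0$ (Lemma 1 of Galaz Fontes--Solis), which is uniform in $n$ and $k$ and makes $p_k(\sC^m e_r)\to 0$ immediate. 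You instead keep only the trivial bound $\phi_{n,j}\le 1$ on the binomial kernel and compensate by an $\ve$--$\delta$ splitting of $\int_0^1\phi_{n,j}f_m\,dt$: the large-$n$ indices are discarded because $w_k\in c_0$, and for the finitely many remaining $n$ you use $\phi_{n,j}(0)=0$ near the origin together with $\int_\delta^1 f_m\to 0$; you also treat $e_1$ directly through $\int_0^1\bigl(1-(1-t)^{n-1}\bigr)f_m\,dt$ rather than by splitting off $\Ker(I-\sC)$. Your route is entirely self-contained (it needs only $\int_0^1 f_m=1$ and the monotonicity of $f_m$, not the external fact $a_m\to 0$) at the cost of a slightly longer case analysis, whereas the paper's route yields the cleaner quantitative estimate $p_k(\sC^m e_r)\le a_m/(r-1)$. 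Your Step 2 merely spells out the standard fact, cited without proof in the paper, that an equicontinuous $\tau_s$-convergent sequence converges uniformly on precompact, hence on bounded, subsets of a Montel space.
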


\begin{proof} We appeal to  Proposition \ref{p.unif} to show
that $\{\sC^m\}_{m\in\N}$ converges to the projection onto ${\rm span}\{\mathbf{1}\}$ along $\ov{(I-\sC)(\Lambda_0(\alpha))}$. Indeed, for each $x\in \Lambda_0(\alpha)$, we have that $x=y+z$ with $y\in \Ker (I-\sC)={\rm span}\{\mathbf{1}\}$ and $z\in \ov{(I-\sC)(\Lambda_0(\alpha))}=\ov{{\rm span}\{e_n\}_{n\geq 2}}$. For each $m\in\N$, observe that  $\sC^mx=\sC^m y+\sC^mz$, with $\sC^m y=y\to y$ in $\Lambda_0(\alpha)$ as $m\to\infty$. The claim is that  $\{\sC^mz\}_{m\in\N}$ is also a convergent  sequence in $\Lambda_0(\alpha)$. To this end,  observe that  \eqref{eq.iter} ensures, for each $m\in\N$ and $r\geq 2$,  that $(\sC^me_r)(n)=0$ if $1\leq n<r$ and
\[
(\sC^me_r)(n)=\left(\begin{array}{c} n-1\\ r-1\end{array}\right)\int_0^1t^{r-1}(1-t)^{n-r}f_m(t)\,dt, \quad n\geq r,
\]
where $\{e_r\}_{r\in\N}$ is the canonical basis in $\Lambda_0(\alpha)$. Proceeding as in the proof of \cite[Theorem 1]{GF}, define $g_m(0):=0$, $g_m(t):=tf_m(t)$, for $0<t\leq 1$, and $a_m:=\sup_{t\in [0,1]}g_m(t)$ for $m\in\N$. Then, for each  $r\geq 2$ and $m\in\N$ we obtain  that
$|(\sC^me_r)(n)|\leq \frac{1}{r-1}a_m$ for all $n\in\N$ and hence, for fixed $k\in\N$,  that $
w_k(n)|(\sC^me_r)(n)|\leq \frac{w_k(n)}{r-1}a_m$, for all $n\in\N$. So, for each $r\geq 2$ and $m\in\N$, it follows from \eqref{eq.seminorme} that $p_k(\sC^me_r)\leq \frac{1}{r-1}a_m$. But, $k\in\N$ is arbitrary and  $a_m\to 0$ as $m\to\infty$ (see \cite[Lemma 1]{GF}) and so,  for each $r\geq 2$, we deduce that $\sC^me_r\to 0$ in $\Lambda_0(\alpha)$ as $m\to\infty$. Since the sequence $\{\sC^m\}_{m\in\N}$ is equicontinuous in $\cL(\Lambda_0(\alpha))$ and the linear span of $\{e_r\}_{r\geq 2}$ is dense in $\ov{(I-\sC)(\Lambda_0(\alpha))}$, it follows that $\sC^mz\to 0$ in $\Lambda_0(\alpha)$ as $m\to\infty$ for each $z\in \ov{(I-\sC)(\Lambda_0(\alpha))}$.
So, it has been shown that  $\sC^mx=\sC^my+\sC^mz\to y$ in  $\Lambda_0(\alpha)$ as $m\to\infty$, for each $x\in\Lambda_0(\alpha)$. Since $\Lambda_0(\alpha)$ is a Fr\'echet Montel space, $\{\sC^m\}_{m\in\N}$ is also convergent  in $\cL_b(\Lambda_0(\alpha))$.
\end{proof}

\begin{remark}\label{R.RCon}\rm  Since $\Lambda_0(\alpha)$ is a Fr\' echet Schwartz space, the sequence  $\{\sC^m\}_{m\in\N}$ is even \textit{rapidly convergent} in $\cL_b(\Lambda_0(\alpha))$, in the sense of \cite{ABR-St}.
\end{remark}

It is known that there exist power bounded, uniformly mean ergodic operators $S$ acting on certain K\"othe echelon spaces for which  the range of $(I-S)$ is \textit{not} a closed subspace, \cite[Propositions 3.1 and 3.3]{ABR-7}. This shows that a result of  Lin \cite{Li}, valid in the Banach space setting,  cannot be extended to general Fr\'echet spaces.
However, we do have the following result.

\begin{prop}\label{p.range} Let $\Lambda_0(\alpha)$ be nuclear. Then the  range $(I-\sC)^m(\Lambda_0(\alpha))$ is a closed subspace of  $\Lambda_0(\alpha)$ for each $m\in\N$.
\end{prop}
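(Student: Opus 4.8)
The plan is to reduce the statement to showing that $I-\sC$ restricts to a topological isomorphism of the closed hyperplane
$$
Z:=\{x\in\Lambda_0(\alpha)\colon x_1=0\}.
$$
By Proposition~\ref{p.unif} one has $Z=\ov{(I-\sC)(\Lambda_0(\alpha))}$, $\Ker(I-\sC)={\rm span}\{\mathbf{1}\}$, and $\Lambda_0(\alpha)={\rm span}\{\mathbf{1}\}\oplus Z$. Since $(\sC x)_1=x_1$ for all $x\in\C^\N$, both $\sC$ and $I-\sC$ map $Z$ into $Z$, whereas $(I-\sC)\mathbf{1}=0$. Hence, for every $m\in\N$,
$$
(I-\sC)^m(\Lambda_0(\alpha))=(I-\sC)^m({\rm span}\{\mathbf{1}\})+(I-\sC)^m(Z)=\{0\}+\bigl((I-\sC)|_Z\bigr)^m(Z),
$$
so it suffices to prove that $(I-\sC)|_Z\colon Z\to Z$ is a topological isomorphism; then each of its powers is one as well, whence $(I-\sC)^m(\Lambda_0(\alpha))=Z$, a closed subspace of $\Lambda_0(\alpha)$, for every $m\in\N$.

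To establish the isomorphism claim I would factor $I-\sC$, using nuclearity via Proposition~\ref{P-K2}. Nuclearity gives $0\notin\sigma(\sC;\Lambda_0(\alpha))$, so $\sC$ is a topological isomorphism of $\Lambda_0(\alpha)$ with $(\sC^{-1}y)_n=ny_n-(n-1)y_{n-1}$ (with $y_0:=0$); since $\sC^{-1}$ also fixes the first coordinate, $\sC|_Z$ is a topological isomorphism of $Z$. Now write $I-\sC=\sC(\sC^{-1}-I)$ and note that $\bigl((\sC^{-1}-I)y\bigr)_n=(n-1)(y_n-y_{n-1})$ for $n\ge 2$, with first coordinate $0$; thus $(\sC^{-1}-I)|_Z=E\circ B$ on $Z$, where $E:={\rm diag}((n-1)_n)$ and $B$ is the backward difference operator $(By)_1:=y_1$, $(By)_n:=y_n-y_{n-1}$ for $n\ge 2$. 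It remains to see that $E|_Z$ and $B|_Z$ are topological isomorphisms of $Z$. For $B$: one has $p_k(By)\le 2p_k(y)$ since $w_k$ is decreasing, so $B\in\cL(\Lambda_0(\alpha))$; its inverse is the partial-sum operator, and $B^{-1}={\rm diag}((n)_n)\circ\sC$ together with ${\rm diag}((n)_n)\in\cL(\Lambda_0(\alpha))$ — precisely the estimate $nw_k(n)\le e^{M}w_l(n)$ provided by condition (ii) of Proposition~\ref{P-K2} — yields $B^{-1}\in\cL(\Lambda_0(\alpha))$; as both $B$ and $B^{-1}$ fix the first coordinate, $B|_Z$ is an isomorphism of $Z$. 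For $E$: again $E\in\cL(\Lambda_0(\alpha))$ because $nw_k(n)\le e^Mw_l(n)$, and on $Z$ the operator $E$ multiplies the $n$-th coordinate ($n\ge2$) by $n-1\ge1$, with inverse on $Z$ equal to multiplication by $1/(n-1)$ on the coordinates $n\ge2$ (and $0$ on the first), which belongs to $\cL(\Lambda_0(\alpha))$ since $w_k(n)/(n-1)\le w_k(n)$. Consequently $(\sC^{-1}-I)|_Z=E|_Z\circ B|_Z$, and therefore $(I-\sC)|_Z=\sC|_Z\circ E|_Z\circ B|_Z$, is a composition of topological isomorphisms of $Z$.

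The sole genuine ingredient is the nuclearity of $\Lambda_0(\alpha)$, which enters exactly through the equivalence (Proposition~\ref{P-K2}) of nuclearity with ${\rm diag}((n)_n)\in\cL(\Lambda_0(\alpha))$, i.e. with $\sC^{-1}\in\cL(\Lambda_0(\alpha))$; everything else is bookkeeping on the $\sC$-invariant hyperplane $Z$. The point to be careful about — where a naive attempt fails — is that restricting to $Z$ is essential: on all of $\Lambda_0(\alpha)$ neither $I-\sC$ nor $E$ is injective, and only after deleting the one-dimensional kernel ${\rm span}\{\mathbf{1}\}$ do the relevant maps become invertible. As an alternative one can bypass the factorization and solve $(I-\sC)y=x$ directly for $x\in Z$: the unique solution with $y_1=0$ is $y_n=\frac{n}{n-1}x_n+\sum_{j=2}^{n-1}\frac{x_j}{j-1}$ for $n\ge 2$, and the bound $w_k(n)\bigl|\sum_{j=2}^{n-1}\frac{x_j}{j-1}\bigr|\le p_l(x)\,(1+\log n)\,e^{-\alpha_n(1/k-1/l)}$ (with $l>k$), which tends to $0$ because $\log n/\alpha_n\to 0$, shows $y\in\Lambda_0(\alpha)$; this gives $(I-\sC)(\Lambda_0(\alpha))=Z$, and then $(I-\sC)^m(\Lambda_0(\alpha))=Z$ follows from the same kernel argument as above.
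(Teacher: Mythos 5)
Your proof is correct, and its main route differs from the paper's in the key technical step. Both arguments reduce the problem to showing that $I-\sC$ restricted to the hyperplane $Z=\{x\in\Lambda_0(\alpha)\colon x_1=0\}$ is a bijection of $Z$ onto itself, and both use nuclearity only through Proposition \ref{P-K2}, that is, through the continuity of ${\rm diag}((n)_n)$ as a map $c_0(w_l)\to c_0(w_k)$ for suitable $l>k$. The paper establishes the bijectivity by writing down the inverse matrix of $(I-\sC)|_{Z}$ explicitly (after conjugating with the left shift so as to work on $\Lambda_0(\tilde\alpha)$) and then verifying its continuity via the $c_0$ operator criterion of Taylor, which requires the estimate $e^{\alpha_{n+1}(1/l-1/k)}(1+\log n)=O(1)$; this is essentially the ``alternative'' you sketch at the end of your proposal. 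Your primary argument instead factors $I-\sC=\sC\circ E\circ B$, with $E={\rm diag}((n-1)_n)$ and $B$ the backward difference operator, and checks that each factor is a topological isomorphism of $Z$; the only nontrivial continuity is that of ${\rm diag}((n)_n)$ (needed for $B^{-1}$ and for $E$), which is exactly condition (ii) of Proposition \ref{P-K2}. This avoids both the explicit inverse matrix and the logarithmic estimate. A second difference concerns the powers $m\ge 2$: the paper invokes an external result (Remark 3.6 of \cite{ABR-7}), whereas you obtain $(I-\sC)^m(\Lambda_0(\alpha))=Z$ for every $m$ immediately, since a power of a topological isomorphism of $Z$ is again one. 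Your factorization is self-contained and arguably cleaner; the paper's computation has the minor advantage of exhibiting the solution of $(I-\sC)y=x$ in closed form.
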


\begin{proof} Consider first  $m=1$.
By Proposition \ref{p.unif} we have that $\ov{(I-\sC)(\Lambda_0(\alpha))}=\{x\in \Lambda_0(\alpha)\colon x_1=0\}$. Set $X_1(\alpha):=\{x\in \Lambda_0(\alpha)\colon x_1=0\}\su \Lambda_0(\alpha)$. Clearly $(I-\sC)(\Lambda_0(\alpha))\su X_1(\alpha)$.    The claim is that  $(I-\sC)(X_1(\alpha))=(I-\sC)(\Lambda_0(\alpha))$. One inclusion is obvious. To estabish the other inclusion, observe that
\begin{equation}\label{eq.rango1}
(I-\sC)x=\left(0, x_2-\frac{x_1+x_2}{2},x_3-\frac{x_1+x_2+x_3}{3},\ldots\right),\quad x=(x_n)_{n\in\N}\in \Lambda_0(\alpha)
\end{equation}
and, in particular,  that
\begin{equation}\label{eq.rango2}
(I-\sC)y=\left(0, \frac{y_2}{2},y_3-\frac{y_2+y_3}{3},y_4-\frac{y_2+y_3+y_4}{4},\ldots\right),\quad y=(y_n)_{n\in\N}\in X_1(\alpha)
\end{equation}
Fix $x\in \Lambda_0(\alpha)$. It follows from \eqref{eq.rango1} that
\begin{equation}\label{eq.rango3}
x_j-\frac{1}{j}\sum_{k=1}^jx_k=\frac{1}{j}\left((j-1)x_j-\sum_{k=1}^{j-1}x_k\right),\quad j\geq 2,
\end{equation}
is the $j$-th coordinate of the vector $(I-\sC)x$. Set $y_i:=x_i-x_1$ for all $i\in\N$ and observe that the vector $y:=(y_i)_{i\in\N}\in X_1(\alpha)$ because
 $(0,1,1,1,\ldots)\in \Lambda_0(\alpha)$; see Lemma \ref{L-1}. Now, via \eqref{eq.rango2}, one shows that the $j$-th coordinate of $(I-\sC)y$ is given by \eqref{eq.rango3} for $j\geq 2$. So, $(I-\sC)x=(I-\sC)y\in (I-\sC)(X_1(\alpha))$.

To show that the range $(I-\sC)(\Lambda_0(\alpha))$ is closed in $\Lambda_0(\alpha)$ it suffices to show that the (restricted) continuous linear operator  $(I-\sC)|_{X_1(\alpha)}\colon X_1(\alpha)\to X_1(\alpha)$ is bijective, actually surjective (as it is clearly injective by \eqref{eq.rango2}).
To establish surjectivity,   observe that $X_1(\alpha)=\cap_{k\in\N}X^{(k)}$, with $X^{(k)}:=\{x\in c_0(w_k)\colon x_1=0\}$ being a closed subspace of $c_0(w_k)$ for all $k\in\N$.
 Actually,  set $\tilde{\alpha}=(\alpha_{n+1})_{n\in\N}$ and $\tilde{w}_k(n):=w_k(n+1)$ for all $k,\ n\in\N$. Then $X_1(\alpha)$ is topologically  isomorphic to $\Lambda_0(\tilde{\alpha}):=\cap_{k\in\N} c_0(\tilde{w}_k)$ via the left shift operator $S\colon X_1(\alpha)\to \Lambda_0(\tilde{\alpha})$ given by $S(x):=(x_2,x_3,\ldots)$ for $x=(x_n)_{n\in\N}\in X_1(\alpha)$. The claim is that  the operator $A:=S\circ (I-\sC)|_{X_1(\alpha)}\circ S^{-1}\in \cL(\Lambda_0(\tilde{\alpha}))$  is bijective with $A^{-1}\in \cL(\Lambda_0(\tilde{\alpha}))$.

To verify  this claim observe that, when considered as acting in  $\C^\N$, the operator $A\colon \C^\N\to \C^\N$ is bijective and its inverse $B:=A^{-1}\colon \C^\N\to \C^\N$ is determined by the lower triangular matrix $B=(b_{nm})_{n,m}$ with entries given by: for each $n\in\N$ we have $b_{nm}=0$ if $m>n$, $b_{nm}=\frac{n+1}{n}$ if $m=n$ and $b_{nm}=\frac{1}{m}$ if $1\leq m<n$. To show   that $B$ is also the inverse of $A$ when acting on $\Lambda_0(\tilde{\alpha})$, we only need to verify  that   $B\in \cL(\Lambda_0(\tilde{\alpha}))$. To establish  this  it suffices to prove,  for each $k\in\N$, that there exists $l\geq k$ such that $\Phi^{-1}_{\tilde{w}_k}\circ B\circ \Phi_{\tilde{w}_l}\in \cL(c_0)$ where, for each $h\in\N$, the operator $\Phi_{\tilde{w}_h}\colon c_0(\tilde{w_h})\to c_0$  given by $ \Phi_{\tilde{w}_h}(x)=(\tilde{w}_h(n+1)x_n)_n$ for $x\in c_0(\tilde{w}_h)$ is a surjective isometry. Whenever $k\in\N$ and  $l>k$, the lower triangular matrix corresponding to $\Phi^{-1}_{\tilde{w}_k}\circ B\circ \Phi_{\tilde{w}_l}$ is given by $D_{l,k}:=(\frac{w_k(n+1)}{w_l(m+1)}b_{nm})_{n,m}$. For each fixed $m\in\N$, note that $D_{l,k}$ satisfies
\[
\lim_{n\to\infty}\frac{w_k(n+1)}{w_l(m+1)}b_{nm}=\frac{1}{mw_l(m+1)}\lim_{n\to\infty}w_k(n+1)=0.
\]
Moreover, for $n\in\N$ fixed, we also have
\begin{eqnarray*}
& &\sum_{m=1}^\infty\frac{w_k(n+1)}{w_l(m+1)}b_{nm}=\frac{(n+1)}{n}\frac{w_k(n+1)}{w_l(n+1)}+w_k(n+1)\sum_{m=1}^{n-1}\frac{1}{mw_l(m+1)}\\
& &\qquad \leq 2+e^{-\frac{\alpha_{n+1}}{k}}\sum_{m=1}^{n-1}\frac{e^{\frac{\alpha_{m+1}}{l}}}{m}\leq 2+e^{\alpha_{n+1}(\frac{1}{l}-\frac{1}{k})}\sum_{m=1}^{n-1}\frac{1}{m}\\
& &\qquad \leq 2+e^{\alpha_{n+1}(\frac{1}{l}-\frac{1}{k})}(1+\log n).
\end{eqnarray*}
These inequalities are valid for every $n\in\N$, whenever $k\in\N$ and $l>k$.

Fix $k\in\N$. Since $\Lambda_0(\alpha)$ is nuclear, there is $l>k$ such that the quantity $\sup_{n\in\N}(\log (n)- (\frac{1}{k}-\frac{1}{l})\alpha_n)=:M<\infty$ (see Proposition \ref{P-K2}). So, for every $n\in\N$, we have
\[
\log n\leq M+(\frac{1}{k}-\frac{1}{l})\alpha_n\leq M+(\frac{1}{k}-\frac{1}{l})\alpha_{n+1}\leq M+e^{(\frac{1}{k}-\frac{1}{l})\alpha_{n+1}}.
\]
This implies that $e^{\alpha_{n+1}(\frac{1}{l}-\frac{1}{k})}\log (n)\leq 1+Me^{\alpha_{n+1}(\frac{1}{l}-\frac{1}{k})}\leq 1+M$ for all $n\in\N$. It then follows
that
\[
\sup_{n\in\N}e^{\alpha_{n+1}(\frac{1}{l}-\frac{1}{k})}(1+\log n)<\infty.
\]
 Thus,  by \cite[Theorem 4.51-C]{T},  $\Phi^{-1}_{\tilde{w}_k}\circ B\circ \Phi_{\tilde{w}_l}\in \cL(c_0)$, as required. That is, $(I-\sC)(\Lambda_0(\alpha))$ is closed in $\Lambda_0(\alpha)$.

Since $(I-\sC)(\Lambda_0(\alpha))$ is closed, it follows from Proposition \ref{p.unif} that $\Lambda_0(\alpha)=\Ker(I-\sC)\oplus (I-\sC)(\Lambda_0(\alpha))$.
The proof of (2)$\Rightarrow$(5) in Remark 3.6 of \cite{ABR-7} shows that $(I-\sC)^m(\Lambda_0(\alpha))$ is closed for all $m\in\N$.
\end{proof}

A Fr\'echet space operator $T\in \cL(X)$, with $X$ separable, is called \textit{hypercyclic} if there exists $x\in X$ such that the orbit $\{T^nx\colon n\in\N_0\}$ is dense in $X$. If, for some $z\in X$ the projective orbit $\{\lambda T^n z\colon \lambda\in\C,\ n\in\N_0 \}$ is dense in $X$, then $T$ is called \textit{supercyclic}. Clearly, hypercyclicity  implies supercyclicity.

\begin{prop}\label{p.Sup} Let $\alpha$ be any sequence with $\alpha_n\uparrow\infty$. Then $\sC\in \cL(\Lambda_0(\alpha))$ is not supercyclic and hence, also not hypercyclic.
\end{prop}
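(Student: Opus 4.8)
The plan is to exploit the rich point spectrum of the transpose operator $\sC'$. Recall that the space $\varphi$ of finitely supported sequences is densely contained in $\Lambda_0(\alpha)$, and consequently (as noted prior to Lemma \ref{L-1}) $\varphi\subseteq\Lambda_0(\alpha)'$; moreover, by the discussion preceding Lemma \ref{L-1}, for each $n\in\N$ the (row) vector $y_n:=e_n'\Delta'\in\varphi$ is an eigenvector of $\sC'\colon\Lambda_0(\alpha)'\to\Lambda_0(\alpha)'$ for the eigenvalue $\tfrac1n$, i.e.\ $\langle\sC x,y_n\rangle=\tfrac1n\langle x,y_n\rangle$ for all $x\in\Lambda_0(\alpha)$. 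In particular $y_1$ and $y_2$ are nonzero and, being eigenvectors of a linear map associated with the distinct eigenvalues $1$ and $\tfrac12$, are linearly independent in $\Lambda_0(\alpha)'$.

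Suppose, for a contradiction, that $\sC$ is supercyclic, and fix $z\in\Lambda_0(\alpha)$ whose projective orbit $O_z:=\{\lambda\sC^nz\colon\lambda\in\C,\ n\in\N_0\}$ is dense in $\Lambda_0(\alpha)$. Consider the continuous linear map $\Phi\colon\Lambda_0(\alpha)\to\C^2$, $\Phi(x):=(\langle x,y_1\rangle,\langle x,y_2\rangle)$. Since $y_1,y_2$ are linearly independent, $\Phi$ is surjective, hence $\Phi(O_z)$ is dense in $\C^2$. On the other hand, writing $a_i:=\langle z,y_i\rangle$, the eigenvalue relations give
\[
\Phi(\lambda\sC^nz)=\bigl(\lambda\langle z,(\sC')^ny_1\rangle,\ \lambda\langle z,(\sC')^ny_2\rangle\bigr)=\bigl(\lambda a_1,\ \lambda 2^{-n}a_2\bigr),\qquad\lambda\in\C,\ n\in\N_0 .
\]
Applying the nonzero continuous functional $\langle\,\cdot\,,y_1\rangle$ to $O_z$ gives the dense subset $\{\lambda a_1\colon\lambda\in\C\}$ of $\C$, whence $a_1\neq0$; likewise $a_2\neq0$.

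It therefore suffices to check that $O:=\{(\lambda a_1,\lambda 2^{-n}a_2)\colon\lambda\in\C,\ n\in\N_0\}$ is \emph{not} dense in $\C^2$, contradicting the density of $\Phi(O_z)=O$. To see this, consider the point $(0,a_2)$: if $(\lambda a_1,\lambda 2^{-n}a_2)$ lies within distance $\delta$ of $(0,a_2)$, then $|\lambda a_1|<\delta$ forces $|\lambda|<\delta/|a_1|$, while $|\lambda 2^{-n}a_2-a_2|<\delta$ together with $2^{-n}\le1$ forces $|\lambda|\ge(|a_2|-\delta)/|a_2|$; for $\delta>0$ small enough (depending only on $|a_1|,|a_2|$) these requirements are incompatible, so the distance from $(0,a_2)$ to $O$ is strictly positive. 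This is the desired contradiction, so $\sC$ is not supercyclic and, a fortiori, not hypercyclic. The only mildly delicate point is the elementary bookkeeping in this last estimate; the conceptual content is merely that $\sC'$ admits two linearly independent eigenvectors in $\Lambda_0(\alpha)'$ (equivalently, $\sigma_{pt}(\sC';\Lambda_0(\alpha)')\supseteq\Sigma$ has at least two points), which already rules out supercyclicity.
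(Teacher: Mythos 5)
Your proof is correct, and it takes a genuinely different route from the paper. The paper's argument is a transfer argument: since $\Lambda_0(\alpha)$ is continuously and densely included in the Banach space $c_0(w_1)$, a supercyclic vector for $\sC$ in $\Lambda_0(\alpha)$ would remain supercyclic for $\sC$ in $c_0(w_1)$, contradicting Proposition 4.13 of the cited reference on weighted $c_0$ spaces. You instead run the classical ``adjoint eigenvalue'' obstruction directly in $\Lambda_0(\alpha)$: the transpose $\sC'$ admits two linearly independent eigenvectors in $\Lambda_0(\alpha)'$ (for the distinct eigenvalues $1$ and $\tfrac12$ -- concretely the functionals $x\mapsto x_1$ and $x\mapsto x_1-x_2$, both in $\varphi\subseteq\Lambda_0(\alpha)'$), and your two-dimensional quotient argument showing that $\{(\lambda a_1,\lambda 2^{-n}a_2)\}$ misses a ball around $(0,a_2)$ is a correct, self-contained proof that this is incompatible with supercyclicity. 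What your approach buys is independence from the external result on $c_0(w)$ and full generality: it works for any sequence space containing $\varphi$ densely on which $\sC$ acts continuously, since the eigenfunctionals are finitely supported. What the paper's approach buys is brevity, given that the Banach-space result is already available. One small notational caveat: the finitely supported eigenvector of $\sC'$ for the eigenvalue $\tfrac1n$ is the $n$-th \emph{row} of $\Delta$ (the paper's expression $e_n'\Delta'$ is the infinitely supported $n$-th column of $\Delta$ read as a row), but this slip is inherited from the paper's own discussion and does not affect your argument, since the functionals you actually use are the correct finitely supported ones.
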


\begin{proof} Suppose that $\sC$ is supercyclic. Since the canonical inclusion $\Lambda_0(\alpha)\su c_0(w_1)$ is continuous and $\Lambda_0(\alpha)$   is dense in $c_0(w_1)$, it follows that $\sC_1\in \cL(c_0(w_1))$ is supercyclic in $c_0(w_1)$ which contradicts Proposition 4.13 of \cite{ABR-9}.
\end{proof}

\bigskip

\textbf{Acknowledgements.} The authors thank the referee for some useful suggestions and improvements.

The research of the first two authors was partially supported by the projects  MTM2013-43540-P, MTM2016-76647-P and GVA Prometeo II/2013/013.

\bigskip

\bibliographystyle{plain}

\end{document}